\providecommand{\U}[1]{\protect\rule{.1in}{.1in}}
\newtheorem{theorem}{Theorem}
\newtheorem{theorem*}{Example}
\newtheorem{conjecture}[theorem]{Conjecture}
\newtheorem{corollary}[theorem]{Corollary}
\newtheorem{definition}[theorem]{Definition}
\newtheorem{proposition}[theorem]{Proposition}
\newtheorem{remark}[theorem]{Observation}
\newenvironment{proof}[1][Proof]{\noindent\textbf{#1.} }{\ \hfill \rule{0.5em}{0.5em}\bigskip}
\begin{document}

\title{Normal $5$-edge-coloring of some snarks superpositioned by the Petersen graph}
\author{Jelena Sedlar$^{1,3}$,\\Riste \v Skrekovski$^{2,3}$ \\[0.3cm] {\small $^{1}$ \textit{University of Split, Faculty of civil
engineering, architecture and geodesy, Croatia}}\\[0.1cm] {\small $^{2}$ \textit{University of Ljubljana, FMF, 1000 Ljubljana,
Slovenia }}\\[0.1cm] {\small $^{3}$ \textit{Faculty of Information Studies, 8000 Novo
Mesto, Slovenia }}\\[0.1cm] }
\maketitle

\begin{abstract}
In a (proper) edge-coloring of a bridgeless cubic graph $G$ an edge $e$ is
rich (resp. poor) if the number of colors of all edges incident to
end-vertices of $e$ is $5$ (resp. $3$). An edge-coloring of $G$ is is normal
if every edge of $G$ is either rich or poor. In this paper we consider snarks
$\tilde{G}$ obtained by a simple superposition of edges and vertices of a
cycle $C$ in a snark $G.$ For an even cycle $C$ we show that a normal coloring
of $G$ can be extended to a normal coloring of $\tilde{G}$ without changing
colors of edges outside $C$ in $G.$ An interesting remark is that this is in
general impossible for odd cycles, since the normal coloring of a Petersen
graph $P_{10}$ cannot be extended to a superposition of $P_{10}$ on a
$5$-cycle without changing colors outside the $5$-cycle. On the other hand, as
our colorings of the superpositioned snarks introduce $18$ or more poor edges,
we are inclined to believe that every bridgeless cubic graph distinct from
$P_{10}$ has a normal coloring with at least one poor edge and possibly with
at least $6$ if we also exclude the Petersen graph with one vertex truncated.

\end{abstract}

\textit{Keywords:} normal edge-coloring; cubic graph; snark; superposition;
Petersen Coloring Conjecture.

\textit{AMS Subject Classification numbers:} 05C15

\section{Introduction}

All graphs we consider in this paper are tacitly assumed to be simple, cubic
and bridgeless, unless explicitly stated otherwise. Let $G$ be a graph with
the set of vertices $V(G)$ and the set of edges $E(G).$ For a vertex $v\in
V(G)$ we denote by $\partial_{G}(v)$ the set of edges in $G$ which are
incident to $v.$ Let $G$ and $H$ be a pair of cubic graphs. A $H$%
\emph{-coloring} of $G$ is any function $\phi:E(G)\rightarrow E(H)$ where for
every $v\in V(G)$ there exists $w\in V(H)$ for which $\phi(\partial
_{G}(v))=\partial_{H}(w).$ If $G$ has an $H$-coloring, we denote it by $H\prec
G.$

Let us denote by $P_{10}$ the Petersen graph. The following conjecture was
stated by Jaeger \cite{Jaeger1988}.

\begin{conjecture}
[Petersen Coloring Conjecture]\label{Con_petersen}For a bridgeless cubic graph
$G,$ it holds that $P_{10}\prec G.$
\end{conjecture}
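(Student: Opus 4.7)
I should preface this by acknowledging that Conjecture~\ref{Con_petersen} is a celebrated open problem; more than three decades of attention have produced partial results but no complete plan. What follows is the natural line of attack I would pursue, together with an honest assessment of where it breaks down.

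The standard first reduction is to restrict attention to snarks. If $G$ is a bridgeless cubic graph with a proper $3$-edge-coloring, then $G$ admits a $K_4$-coloring in the sense of the definition above (each color class of $G$ is sent to the corresponding perfect matching of $K_4$), and since one checks directly that $P_{10} \prec K_4$, the composition of the two maps gives $P_{10} \prec G$. Hence it suffices to verify the conjecture for snarks, i.e.\ bridgeless cubic graphs of chromatic index four. For the base of the induction one takes $G = P_{10}$ itself, where the identity map witnesses $P_{10} \prec P_{10}$.

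For a general snark my plan would be structural: show that every snark distinct from $P_{10}$ can be built from smaller snarks by operations that preserve $P_{10}$-colorability. The natural operations are reductions across nontrivial cyclic cuts of order at most five and inverse operations of the superposition type studied in the present paper (along matchings or cycles). If one could prove that (i) every snark $G \neq P_{10}$ decomposes across a small cyclic cut into smaller snarks, or is realized as the superposition of a smaller snark on a substructure, and (ii) each such decomposition lifts a $P_{10}$-coloring from the pieces to $G$, then induction on $|V(G)|$ would close the argument. The results of this paper would fit step (ii) for the particular case of cycle superposition.

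The hard part, and the reason the conjecture has resisted proof, is step (i). Cyclically $6$-edge-connected snarks of arbitrarily complex structure are known to exist, and no local reduction is known that simultaneously shrinks such a graph and preserves (or allows the recovery of) a $P_{10}$-coloring of the original. A realistic downgrade of this plan, and the one I expect the paper to follow, is to abandon the universal induction and instead target a well-defined subclass — here, snarks obtained by a simple superposition of edges and vertices of a cycle in an existing snark — where the structure is transparent enough that a $P_{10}$-coloring (in fact a normal $5$-edge-coloring, which implies $P_{10}$-colorability through Jaeger's equivalence) can be exhibited explicitly. For that restricted task, the main obstacle I would anticipate is controlling the parity of the superpositioning cycle: as the abstract already warns, odd cycles obstruct the naive extension of a normal coloring, so a separate construction will be needed there.
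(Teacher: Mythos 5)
There is nothing in the paper to compare your attempt against: the statement is Jaeger's Petersen Coloring Conjecture, which the paper records as Conjecture~\ref{Con_petersen} precisely because it is open. The paper proves only a special case of the equivalent normal-$5$-edge-coloring formulation (via Proposition~\ref{Prop_equivalentNormalPetersen}), namely Theorem~\ref{Tm_main} for superpositions in $\mathcal{G}_{C}(\mathcal{A},\mathcal{B})$ along an \emph{even} cycle $C$ of a snark already known to admit a normal $5$-coloring. Your proposal, as you yourself say, is likewise not a proof, so the honest verdict is that it contains a genuine and unavoidable gap rather than a wrong step.

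Concretely: your reduction to snarks is correct, though the detour through $K_4$ is unnecessary --- a proper $3$-edge-coloring of $G$ gives $P_{10}\prec G$ directly by sending every edge of colour $i$ to the $i$-th edge of one fixed vertex star of $P_{10}$ (equivalently, a $3$-edge-coloring is a normal $5$-edge-coloring with every edge poor). The fatal gap is your step (i): no decomposition theorem is known asserting that every snark other than $P_{10}$ is built from smaller snarks by operations that preserve $P_{10}$-colorability, and cyclically $6$-edge-connected snarks of arbitrarily large order show that small cyclic cuts cannot be the whole story. Step (ii) is also not free where decompositions do exist: Proposition~\ref{Prop_odd} of the paper shows that for the Petersen graph superpositioned on a $5$-cycle the coloring of the base snark cannot be lifted while preserving colours outside the cycle, so any lifting argument must be allowed to recolour globally. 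Your closing paragraph correctly predicts the paper's actual, much more modest target.
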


It is known that the Petersen Coloring Conjecture implies several well known
conjectures, such as Berge-Fulkerson Conjecture and also (5,2)-cycle-cover
Conjecture, thus it seems difficult to prove. It motivated several variants
and different approaches to it \cite{Mkrtchjan2013, Riste2020, Samal2011}, one
such approach are normal colorings.

\paragraph{Normal colorings.}

A \emph{(proper) }$k$\emph{-edge-coloring} of a graph $G$ is any mapping
$\sigma:E(G)\rightarrow\{1,\ldots,k\}$ such that any pair of adjacent edges of
$G$ receives distinct colors by $\sigma$. Let $\sigma$ be a $k$-edge-coloring
of $G$ and $v\in V(G),$ then we denote $\sigma(v)=\{\sigma(e):e\in\partial
_{G}(v)\}.$

\begin{definition}
Let $G$ be a bridgeless cubic graph and $\sigma$ a proper edge-coloring of
$G.$ An edge $uv$ of $G$ is called \emph{poor} (resp. \emph{rich}) if
$\left\vert \sigma(u)\cup\sigma(v)\right\vert =3$ (resp. $\left\vert
\sigma(u)\cup\sigma(v)\right\vert =5$).
\end{definition}

An edge of $G$ is called \emph{normal}, if it is either rich or poor. A proper
edge-coloring of a cubic graph $G$ is \emph{normal}, if every edge of $G$ is
normal. Normal edge-colorings were introduced by Jaeger in \cite{Jaeger1985}.
Notice that a normal coloring in which every edge is poor must be a
$3$-edge-coloring. On the other hand, a normal coloring in which every edge is
rich is called a \emph{strong edge-coloring}. The \emph{normal chromatic
index} of a cubic graph $G$ is defined as the smallest number $k$ such that
$G$ has a normal $k$-edge-coloring, and it is denoted by $\chi_{N}^{\prime
}(G).$ Notice that $\chi_{N}^{\prime}(G)$ is at least $3$ and it never equals
$4$. The following result was established in \cite{Jaeger1985}.

\begin{proposition}
\label{Prop_equivalentNormalPetersen}For a cubic graph $G$, it holds that
$P_{10}\prec G$ if and only if $G$ has a normal $5$-edge-coloring.
\end{proposition}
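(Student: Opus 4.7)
The statement is a biconditional, and for both directions I would work with the Kneser model of the Petersen graph: identify $V(P_{10})$ with $\binom{\{1,\ldots,5\}}{2}$, declaring $A\sim B$ iff $A\cap B=\emptyset$, and equip $P_{10}$ with the canonical $5$-edge-coloring $\sigma_P$ defined by $\sigma_P(AB)=\{1,\ldots,5\}\setminus(A\cup B)$. A direct check gives $\sigma_P(A)=\{1,\ldots,5\}\setminus A$ at every vertex $A$ and shows that every edge of $P_{10}$ is rich under $\sigma_P$, so $\sigma_P$ is itself a normal $5$-edge-coloring.

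For the $(\Rightarrow)$ direction, given a $P_{10}$-coloring $\phi$ of $G$, I would set $\sigma:=\sigma_P\circ\phi$. Properness is immediate since $\phi$ bijects $\partial_G(v)$ with $\partial_{P_{10}}(\Phi(v))$ for each $v$, making $\sigma(v)$ a $3$-set. Normality follows edge-by-edge: for $e=uv$ with $\phi(e)$ an edge of $P_{10}$, one has $|\sigma(u)\cup\sigma(v)|=|\sigma_P(\Phi(u))\cup\sigma_P(\Phi(v))|\in\{3,5\}$ by the normality of $\sigma_P$ at $\phi(e)$.

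For the harder $(\Leftarrow)$ direction, given a normal $5$-edge-coloring $\sigma$ of $G$, the idea is to reverse-engineer a $P_{10}$-coloring. Set $\Phi(v):=\{1,\ldots,5\}\setminus\sigma(v)\in V(P_{10})$ for each $v\in V(G)$, and for each edge $e=uv$ with $\sigma(e)=\ell$ define $\phi(e)$ to be the unique edge of $P_{10}$ incident to $\Phi(u)$ whose $\sigma_P$-label equals $\ell$. Well-definedness amounts to showing that the same edge is obtained if we look at $\Phi(v)$ instead: for a rich $e$ one has $\sigma(u)\cap\sigma(v)=\{\ell\}$, forcing $\Phi(u),\Phi(v)$ to be disjoint $2$-sets whose union is $\{1,\ldots,5\}\setminus\{\ell\}$, so the edge is $\Phi(u)\Phi(v)$ and the $\sigma_P$-label check is immediate; for a poor $e$ one has $\sigma(u)=\sigma(v)$, hence $\Phi(u)=\Phi(v)$, and the same edge is read off from either side. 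At each $v\in V(G)$ the three distinct colors in $\sigma(v)$ correspond bijectively to the three $\sigma_P$-labels at $\Phi(v)$, which are exactly $\sigma_P(\Phi(v))=\{1,\ldots,5\}\setminus\Phi(v)=\sigma(v)$, so $\phi(\partial_G(v))=\partial_{P_{10}}(\Phi(v))$.

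The principal subtlety is the treatment of poor edges: both endpoints $u,v$ of such an edge map to the same vertex of $P_{10}$, and yet $\phi(e)$ is a genuine edge of $P_{10}$ whose second endpoint is typically not the $\Phi$-image of any vertex of $G$. This is permitted by the definition of an $H$-coloring, which constrains $\phi$ only through its restrictions to the sets $\partial_G(v)$ and does not require $\phi$ to arise from a graph homomorphism $V(G)\to V(H)$; the labelled Kneser picture is precisely what makes the local constraints fit together globally.
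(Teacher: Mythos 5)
Your proof is correct. Note that the paper does not actually prove this proposition --- it is quoted from Jaeger's 1985 paper --- so there is no in-text argument to compare against; what you give is the standard argument via the Kneser labelling $\sigma_P(AB)=\{1,\ldots,5\}\setminus(A\cup B)$, and the key points (every edge of $P_{10}$ is rich under $\sigma_P$; in the reverse direction the rich/poor dichotomy forces $\Phi(u),\Phi(v)$ to be either disjoint or equal, which makes $\phi$ well defined; and the image of a poor edge need not join two $\Phi$-images, which the local definition of an $H$-coloring permits) are all handled correctly. The one step stated more tersely than it deserves is the normality check in the forward direction: you should say explicitly that $\phi(e)\in\partial_{P_{10}}(\Phi(u))\cap\partial_{P_{10}}(\Phi(v))$ forces $\Phi(u)$ and $\Phi(v)$ to be endpoints of the single edge $\phi(e)$, so either they coincide (and $e$ is poor) or they are its two ends (and $e$ is rich because that edge of $P_{10}$ is rich under $\sigma_P$); without this remark the displayed equality of unions does not by itself follow from ``normality of $\sigma_P$ at $\phi(e)$.''
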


According to Proposition \ref{Prop_equivalentNormalPetersen}, Conjecture
\ref{Con_petersen} can be restated as follows.

\begin{conjecture}
\label{Con_normal}For a bridgeless cubic graph $G,$ it holds that $\chi
_{N}^{\prime}(G)\leq5.$
\end{conjecture}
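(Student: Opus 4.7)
My plan exploits Proposition \ref{Prop_equivalentNormalPetersen} to translate the assertion $\chi_N'(G)\le 5$ into the existence of a Petersen coloring $\phi:E(G)\to E(P_{10})$, and then proceeds by induction on $|V(G)|$. The base of the induction is the observation that every $3$-edge-colorable cubic graph is handled trivially: in a proper $3$-edge-coloring each vertex already sees all three colors, so every edge $uv$ has $|\sigma(u)\cup\sigma(v)|=3$ and is poor, giving a normal $3$-edge-coloring at once. Hence only snarks require a genuine argument.

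Next I would handle small edge cuts. If $G$ contains a $2$-edge-cut, or a non-trivial $3$-edge-cut, the standard splitting operations produce strictly smaller bridgeless cubic graphs; induction supplies normal $5$-edge-colorings on each piece, and one checks that by permuting color names we can match up the colors across the cut so that the two colorings glue into a single normal one on $G$. This step reduces the problem to cyclically $4$-edge-connected snarks, where the classical reductions fail and finer structural information about $G$ must be used.

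For the remaining case my approach mirrors the paper's superposition strategy. Many cyclically $4$-edge-connected snarks arise by superposing a smaller snark $G'$ along a cycle $C$: one blows up the vertices and edges of $C$ into copies of a fixed gadget (here, pieces of $P_{10}$). Given a normal $5$-edge-coloring of $G'$ supplied by the inductive hypothesis, the goal is to extend it across the gadgets without altering any color on $E(G)\setminus E(C)$. The core is a local case analysis at each vertex and each edge of $C$: for every combinatorial pattern of boundary colors inherited from $G'$, one produces a compatible normal extension of the coloring on the associated Petersen piece, and then shows that these local extensions are globally consistent around $C$.

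The principal obstacle, already flagged in the abstract, is that this local extension can fail on odd cycles: the paper observes that a normal coloring of $P_{10}$ itself cannot be extended to a $5$-cycle superposition without disturbing colors outside $C$. Consequently the induction as described settles only the even-cycle case, and a complete proof of Conjecture \ref{Con_normal} must either permit a bounded recoloring of $G'$ in a neighborhood of $C$, or deal via a genuinely different mechanism with those snarks that admit no even-cycle superposition decomposition. Since no structural theorem currently guarantees such a decomposition for arbitrary cyclically $4$-edge-connected snarks, I expect this to be the true sticking point, which is consistent with the longstanding open status of the conjecture and with the fact that the present paper confines itself to a concrete family of superposition snarks.
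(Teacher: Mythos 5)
What you have written is not a proof, and indeed cannot be one in the present state of knowledge: by Proposition \ref{Prop_equivalentNormalPetersen} the statement $\chi_N'(G)\le 5$ is exactly Jaeger's Petersen Coloring Conjecture, which is a long-standing open problem. The paper itself offers no proof of Conjecture \ref{Con_normal}; it only verifies the conjecture for the particular family $\mathcal{G}_C(\mathcal{A},\mathcal{B})$ of snarks superpositioned along an \emph{even} cycle (Theorem \ref{Tm_main}), together with a corollary allowing twists. So the honest comparison is that your proposal attempts vastly more than the paper does, and the two places where it goes beyond the paper are precisely where it breaks down.

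Concretely, there are two unproved steps. First, the reduction along $2$- and $3$-edge-cuts is asserted but not justified: to glue two normal $5$-colorings across a cut you must match not merely the colors of the cut edges but their \emph{complete colors} (the color of each semiedge together with the pair of colors seen at its end-vertex), and you must ensure the resulting cut edges are still rich or poor on both sides simultaneously. A permutation of color names does not in general achieve this, and no such reduction lemma for normal $5$-colorings is known; this is one reason the conjecture does not yield to the classical snark reductions. Second, and fatally, your final step presupposes that every cyclically $4$-edge-connected snark arises as a superposition of a strictly smaller snark along an even cycle with the specific gadgets $A$, $A'$, $B$. No such structural decomposition theorem exists, and generic snarks (including $P_{10}$ itself) admit no decomposition of this kind. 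You acknowledge this gap yourself in your last paragraph, which means the argument terminates in a reduction to a statement at least as open as the one you set out to prove. The base case (every $3$-edge-colorable cubic graph has a normal $3$-coloring, hence a normal $5$-coloring) is correct and is the same trivial observation the paper makes after stating the conjecture, but everything past that point is either unestablished or known to fail.
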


\noindent Since any proper $3$-edge-coloring of a cubic graph $G$ is a normal
edge-coloring in which every edge is poor, Conjecture \ref{Con_normal}
obviously holds for every $3$-edge colorable graph. The best result so far
regarding the upper bound on the normal chromatic index is $\chi_{N}^{\prime
}(G)\leq7$ \cite{BilkovaHana, Mazzucuolo2020normal}, and for several graph
classes the bound is lowered to $6$ \cite{Mazzucuolo2020normal6}. According to
Vizing's theorem, every cubic graph is either $3$-edge colorable or $4$-edge
colorable, so to prove Conjecture \ref{Con_normal} it remains to establish
that it holds for all bridgeless cubic graphs which are not $3$-edge colorable.

\paragraph{Snarks and superposition.}

Cubic graphs which are not $3$-edge colorable are usually considered under the
name of snark \cite{MazzuoccoloStephen,NedelaSkovieraSurvey}. In order to
avoid trivial cases, the definition of snark usually includes some additional
requirements on connectivity. Since in this paper such requirements are not
essential, we will go with the broad definition of a \emph{snark} being any
bridgeless cubic graph which is not $3$-edge colorable. The existence of a
normal $5$-coloring for some families of snarks has already been established,
see for example \cite{MazzuccoloLupekhine, Hagglund2014}.

A general method for obtaining snarks is the superposition \cite{Adelson1973,
Descartes1946, KocholSuperposition, Kochol2, MacajovaRevisited}. We will focus
our attention to a class of snarks obtained by some particular superpositions
of edges and vertices of a cycle in a snark, so let us first introduce the
method of superposition.

\begin{definition}
A \emph{multipole} $M=(V,E,S)$ is a triple which consists of a set of vertices
$V=V(M)$, a set of edges $E=E(M)$, and a set of semiedges $S=S(M)$. A semiedge
is incident either to one vertex or to another semiedge in which case a pair
of incident semiedges forms a so called \emph{isolated edge} within the multipole.
\end{definition}

\begin{figure}[h]
\begin{center}%
\begin{tabular}
[c]{ll}%
\begin{tabular}
[c]{c}%
\includegraphics[scale=0.6]{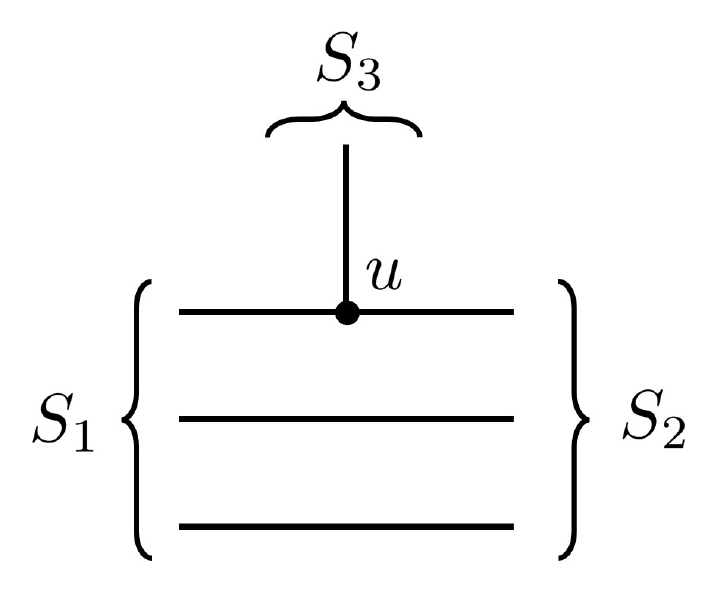}\\
$A$%
\end{tabular}
&
\begin{tabular}
[c]{c}%
\includegraphics[scale=0.6]{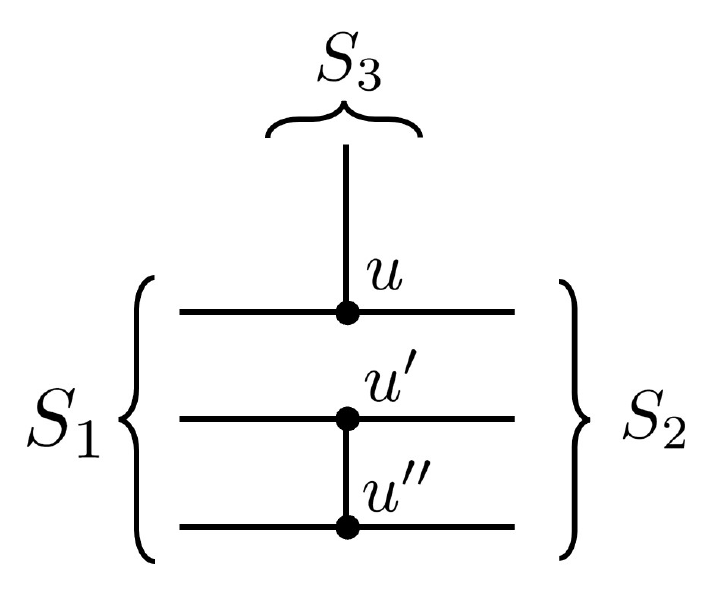}\\
$A^{\prime}$%
\end{tabular}
\end{tabular}
\end{center}
\caption{Multipoles $A$ and $A^{\prime}$. Notice that $A$ has two isolated
edges. Both $A$ and $A^{\prime}$ are $7$-poles, and if their set of semiedges
is divided into three connectors $S_{1},$ $S_{2}$ and $S_{3}$ as shown in the
figure, then $A$ and $A^{\prime}$ are supervertices.}%
\label{Fig_superVert}%
\end{figure}

For an illustration of multipole see Figure \ref{Fig_superVert}. Notice that a
graph $G$ is a multipole with $S=\emptyset.$ If $\left\vert S(M)\right\vert
=k,$ then $M$ is also called a $k$\emph{-pole}. All multipoles we consider
here are cubic, i.e. such that every vertex is incident to precisely three
edges or semiedges. The set of semiedges of a $k$-pole $M$ can be divided into
several so called connectors, i.e. we define a $(k_{1},\ldots,k_{n}%
)$\emph{-pole} to be $M=(V,E,S_{1},\ldots,S_{n})$ where $S_{1},\ldots,S_{n}$
is a partition of the set $S(M)$ into pairwise disjoint sets $S_{i}$ such that
$\left\vert S_{i}\right\vert =k_{i}>0$ for $i=1,\ldots,n$ and $k_{1}%
+\cdots+k_{n}=k.$ Each set $S_{i}$ is called a \emph{connector} of $M,$ or
more precisely $k_{i}$\emph{-connector}.

\begin{definition}
A \emph{supervertex} (resp. \emph{superedge}) is a multipole with three (resp.
two) connectors.
\end{definition}

Let $G$ be a snark and $u_{1},u_{2}$ a pair of non-adjacent vertices of $G.$ A
superedge $G_{u_{1},u_{2}}$ is obtained from $G$ by removing vertices $u_{1}$
and $u_{2},$ and then replacing all edges incident to a vertex $u_{i}$ in $G$
by semiedges in $G_{u_{1},u_{2}}$ which form a connector $S_{i}$, for $i=1,2.$

\begin{definition}
A \emph{proper }superedge is any superedge $G_{u_{1},u_{2}},$ where $G$ is a
snark, or an isolated edge.
\end{definition}

\noindent A proper superedge can be defined in a wider sense (see
\cite{KocholSuperposition}), but for the purposes of the present paper this
simple definition suffices. Now, let $G=(V,E)$ be a cubic graph. Replace each
edge $e\in E$ by a superedge $\mathcal{E}(e)$ and each vertex $v\in V$ by a
supervertex $\mathcal{V}(v)$ so that the following holds: if $v\in V$ is
incident to $e\in E$, then semiedges of one connector of $\mathcal{V}(v)$ are
identified with semiedges of one connector from $\mathcal{E}(e)$ and these two
connectors must have the same cardinality. Notice that $\mathcal{V}(v)$ (resp.
$\mathcal{E}(e)$) does not have to be the same for every vertex $v$ (resp.
edge $e),$ so $\mathcal{V}$ (resp. $\mathcal{E}$) is a mapping of the set of
vertices $V(G)$ to a set of supervertices (resp. superedges). This procedure
results in a cubic graph which is called a \emph{superposition} of $G$ with
$\mathcal{V}$ and $\mathcal{E}$ and denoted by $G(\mathcal{V},\mathcal{E})$.
Moreover, if $\mathcal{E}(e)$ is proper for every $e\in E$, then
$G(\mathcal{V},\mathcal{E})$ is called a \emph{proper} superposition of $G$.
Since a multipole consisting of a single vertex and three semiedges is a
supervertex, and an isolated edge is proper superedge, some of the vertices
and edges of $G$ may be superposed by themselves. The following theorem is
established in \cite{KocholSuperposition}, mind that it is stated there for
snarks of girth $\geq5$ which are cyclically $4$-edge connected, but it also
holds for snarks of smaller girth.

\begin{theorem}
\label{Tm_Kochol}If $G$ is a snark and $G(\mathcal{V},\mathcal{E})$ is a
proper superposition of $G$, then $G(\mathcal{V},\mathcal{E})$ is a snark.
\end{theorem}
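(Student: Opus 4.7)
The graph $G(\mathcal{V},\mathcal{E})$ is cubic by construction, so it suffices to show that it is bridgeless and not $3$-edge-colorable. The heart of the argument is the second property, which I would prove by contradiction using the classical identification of proper $3$-edge-colorings of a cubic graph with nowhere-zero $\mathbb{Z}_{2}\times\mathbb{Z}_{2}$-flows: the three colors are the three non-zero elements of $\mathbb{Z}_{2}\times\mathbb{Z}_{2}$, and at every vertex the three incident colors sum to $0$.

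Assume $\tilde{\sigma}$ is a proper $3$-edge-coloring of $G(\mathcal{V},\mathcal{E})$. For each $e\in E(G)$, define $\sigma(e)\in\mathbb{Z}_{2}\times\mathbb{Z}_{2}$ to be the sum of $\tilde{\sigma}$-colors over the semiedges of one connector of $\mathcal{E}(e)$. Since at each internal vertex of $\mathcal{E}(e)$ the three incident colors sum to $0$, the sum over all semiedges of $\mathcal{E}(e)$ is $0$, so the two connector-sums agree and $\sigma(e)$ is well defined. Next, I would check that $\sigma(e)\ne0$: for an isolated edge it is immediate, and for a proper superedge $\mathcal{E}(e)=G'_{u_{1},u_{2}}$ with $G'$ a snark, a zero connector-sum would force the three semiedges of that connector to receive all three distinct colors; since $|V(G')|$ is even, the same parity identity (every color has an even number of semiedges in the superedge) forces the same "rainbow" pattern on the other connector, and then putting $u_{1}$ and $u_{2}$ back yields a proper $3$-edge-coloring of $G'$, contradicting that $G'$ is a snark. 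Finally, applying the same vertex-sum-is-zero identity inside each supervertex $\mathcal{V}(v)$ gives $\sigma(e_{1})+\sigma(e_{2})+\sigma(e_{3})=0$ for the three edges $e_{1},e_{2},e_{3}$ of $G$ incident to $v$; three non-zero elements of $\mathbb{Z}_{2}\times\mathbb{Z}_{2}$ that sum to $0$ must be the three distinct non-zero elements, so $\sigma$ is a proper $3$-edge-coloring of $G$, contradicting that $G$ is a snark.

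For bridgelessness I would argue more directly. A hypothetical bridge $\tilde{e}$ of $G(\mathcal{V},\mathcal{E})$ lies inside some multipole $M$, either a supervertex or a proper superedge, and its removal splits $M$ into two parts $A$ and $B$. For the edge $\tilde{e}$ to be a bridge of the whole superposition, the connectors of $M$ must be partitioned entirely between $A$ and $B$, which in turn induces a $1$-edge cut in $G$ through the correspondence between connectors and incidences in $G$. For a superedge $G'_{u_{1},u_{2}}$ this contradicts the bridgelessness of the snark $G'$ (a single internal bridge separating $S_{1}$ from $S_{2}$ would give a bridge of $G'$ after reattaching $u_{1},u_{2}$), and for a supervertex it contradicts the bridgelessness of $G$.

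The main obstacle is the non-vanishing step: showing that no connector of a proper superedge can receive the rainbow profile in any $\tilde{\sigma}$. This is where the snark hypothesis on the building block $G'$ is actually consumed, and it hinges on the parity conclusion that a rainbow profile on one connector forces a rainbow profile on the other, so that the semiedges can be honestly completed into a proper $3$-edge-coloring of $G'$. Once this is settled, both the propagation of non-$3$-colorability from $G$ to $G(\mathcal{V},\mathcal{E})$ and the bridgelessness argument become short parity/connectivity computations.
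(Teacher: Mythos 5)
The paper gives no proof of Theorem \ref{Tm_Kochol}: it is imported from \cite{KocholSuperposition} with only the remark that the girth and connectivity hypotheses used there can be dropped, so your proposal can only be measured against Kochol's own argument. Your treatment of non-$3$-edge-colourability is correct and is essentially that argument: view a proper $3$-edge-colouring as a nowhere-zero $\mathbb{Z}_{2}\times\mathbb{Z}_{2}$-flow, sum the vertex identities over the interior of each superedge to see that the two connector-sums agree, rule out the value $0$ (a zero sum on a $3$-connector forces a rainbow, the per-colour parity count forces the opposite connector to be rainbow as well, and reinserting $u_{1},u_{2}$ would $3$-edge-colour the snark $G'$), and project to a proper $3$-edge-colouring of $G$ via the supervertex identity. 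Two points you should make explicit: the connectors of $G'_{u_{1},u_{2}}$ have size $3$ precisely because $u_{1},u_{2}$ are required to be non-adjacent, which is what makes the rainbow step available; and an isolated edge sitting \emph{inside} a supervertex (both $A$ and $A'$ contain two of them) is incident to no vertex, so it enters the sum over the semiedges of that supervertex twice with the same colour and contributes $0$, which is needed for the identity $\sigma(e_{1})+\sigma(e_{2})+\sigma(e_{3})=0$ to survive.

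The bridgelessness half, however, has a genuine gap. First, an edge of $G(\mathcal{V},\mathcal{E})$ need not lie inside any multipole: the edges created by identifying semiedges of matched connectors (possibly running through an isolated edge of a supervertex) belong to no $M$, and your case analysis omits them entirely; note that the flow computation gives you nothing here, since a cubic graph with a bridge is automatically not $3$-edge-colourable. Second, for an internal edge of a superedge whose removal disconnects that superedge, it is not true that the connectors must be ``partitioned entirely'' between the two sides --- semiedges of one connector can land on both sides, and whether the two sides then reconnect through the neighbouring supervertex depends on the internal structure of that supervertex, which under the paper's definition is an arbitrary multipole with three connectors ($A$ itself is disconnected, consisting of one vertex and two isolated edges). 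A clean repair is to show directly that every edge of the superposition lies on a cycle, by lifting through the connectors either a cycle of $G$ (for edges tied to the skeleton) or a cycle of the underlying snark $G'$ avoiding or passing through $u_{1},u_{2}$ (for edges internal to a superedge); as written, your reduction ``internal bridge $\Rightarrow$ $1$-edge cut of $G$ or of $G'$'' does not go through.
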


We will establish that Conjecture \ref{Con_normal} holds for some snarks
$G(\mathcal{V},\mathcal{E})$ obtained for particular supervertices
$\mathcal{V}$ and superedges $\mathcal{E}$ superposed on vertices and edges of
a cycle in $G.$

\section{Main results}

Let us first extend the notion of normal colorings to multipoles. First, a
\emph{(proper) }$k$\emph{-edge-coloring} of a multipole $M=(V,E,S)$ is any
mapping $\sigma:E\cup S\rightarrow\{1,\ldots,k\}$ such that all edges and
semiedges incident to a same vertex have distinct colors. A proper
$k$-edge-coloring $\sigma$ of a cubic multipole $M$ is \emph{normal} if every
edge $e\in E$ of $M$ is normal. A handy tool when considering normal colorings
is the concept of Kempe chains. Let $\sigma$ be a normal coloring of a cubic
multipole $M=(V,E,S)$ and let $P=s_{1}e_{2}\cdots e_{p-1}s_{p}$, for $p>1,$
$e_{i}\in E$ and $s_{i}\in S,$ be a path in $M$ starting and ending with a
semiedge. We say that $P$ is a \emph{Kempe }$(i,j)$\emph{-chain} with respect
to $\sigma,$ if (semi)edges of $P$ are alternatively colored by $i$ and $j$.

\begin{remark}
\label{Obs_Kempe}Let $\sigma$ be a normal $k$-edge-coloring of a cubic
multipole $M=(V,E,S)$ and let $P=s_{1}e_{2}\cdots e_{p-1}s_{p}$ be a Kempe
$(i,j)$-chain of $M$ with respect to $\sigma.$ Then every edge $e_{i}$ from
$P$ is poor in $\sigma.$ Also, an edge-coloring $\sigma^{\prime}$ of $M$
obtained from $\sigma$ by swapping colors $i$ and $j$ along $P$ is also a
normal $k$-edge-coloring of $M$, preserving the same set of poor edges.
\end{remark}

\paragraph{Submultipoles.}

A \emph{submultipole} of a multipole $M=(V,E,S)$ is any multipole $M^{\prime
}=(V^{\prime},E^{\prime},S^{\prime})$ such that $V^{\prime}\subseteq V,$
$E^{\prime}\subseteq E$ and $S^{\prime}\subseteq S_{E}\cup S$ where $S_{E}$
denotes the set of semiedges obtained by halving the edges from $E.$ The
notion of submultipole is illustrated by Figure \ref{Fig_submultipole}.

\begin{figure}[h]
\begin{center}
\includegraphics[scale=0.7]{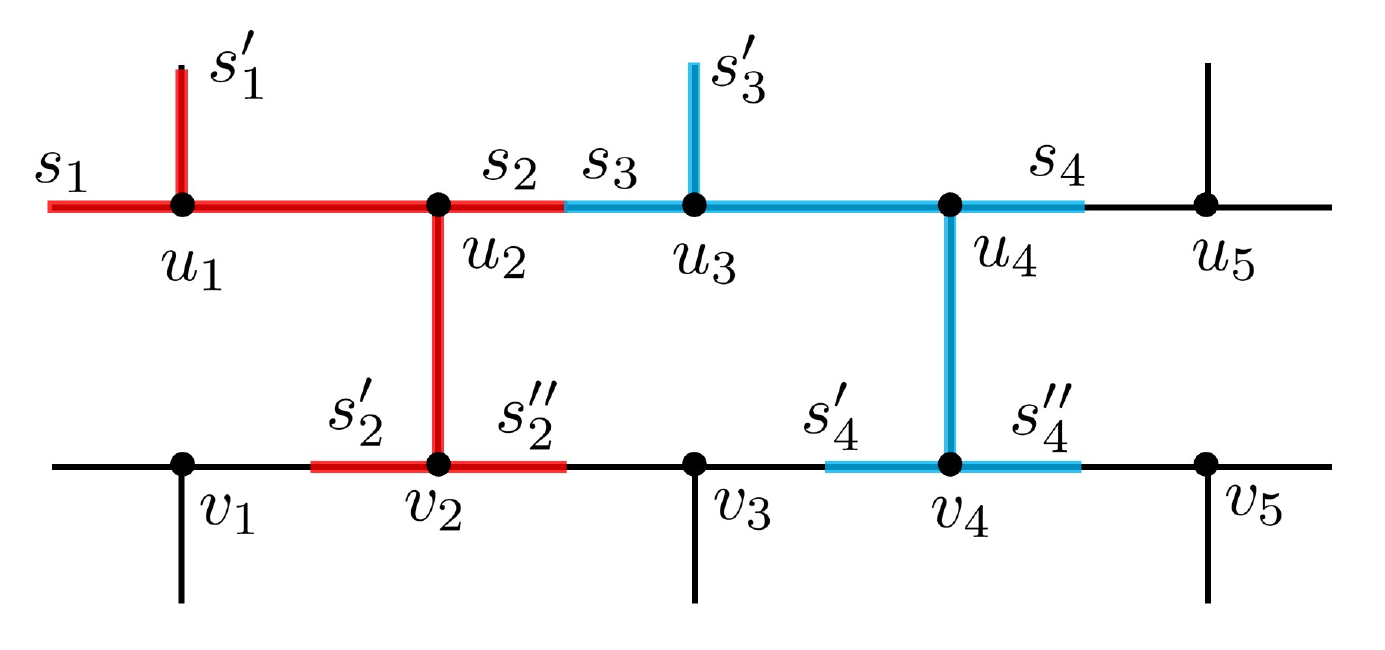}
\end{center}
\caption{Figure shows a multipole $M$ and two of its submultipoles,
$M^{\prime}$ in red and $M^{\prime\prime}$ in blue.}%
\label{Fig_submultipole}%
\end{figure}

\begin{remark}
Let $M$ be a multipole, and $M^{\prime}$ and $M^{\prime\prime}$ its two
submultipoles as in Figure \ref{Fig_submultipole}. Then semiedges $s_{1}$ and
$s_{1}^{\prime}$ of $M^{\prime}$ are inherited from the set of semiedges of
$M,$ on the other hand semiedges $s_{2},$ $s_{2}^{\prime}$ and $s_{2}%
^{\prime\prime}$ of $M^{\prime}$ are not inherited from the set of semiedges
of $M,$ they are the halves of edges from $M$ for which the other half is not
present in $M^{\prime}$.
\end{remark}

Let $\sigma$ be an edge-coloring of a multipole $M=(V,E,G)$ and let
$M^{\prime}=(V^{\prime},E^{\prime},G^{\prime}).$ A \emph{restriction}
$\sigma^{\prime}=\left.  \sigma\right\vert _{M^{\prime}}$ of the coloring
$\sigma$ to the submultipole $M^{\prime}$ is defined by $\sigma^{\prime
}(e)=\sigma(e)$ for every $e\in E^{\prime},$ $\sigma^{\prime}(s)=\sigma(s)$
for every $s\in S\cap S^{\prime}$ and $\sigma^{\prime}(s)=\sigma(e)$ for every
$s\in S^{\prime}\backslash S$ which is a half of an edge $e\in E.$ For a set
$T\subseteq V(M),$ an \emph{induced} submultipole $M^{\prime}=M[T]$ of $M$ is
a submultipole in which the set of vertices is $T,$ set of edges is the set of
all edges of $G$ with both end-vertices in $T,$ and the set of semiedges
consists of all semiedges of $M$ with its only end-vertex in $T$ and halves of
edges of $M$ which have precisely one end-vertex in $T.$ Let $M$ be a
multipole, and $M^{\prime}=(V^{\prime},E^{\prime},S^{\prime})$ and
$M^{\prime\prime}=(V^{\prime\prime},E^{\prime\prime},S^{\prime\prime})$ a pair
of submultipoles of $M.$ Let $V^{\prime\prime\prime}=V^{\prime}\cup
V^{\prime\prime},$ further let $E^{\prime\prime\prime}$ be the set of edges
$e$ of $G$ such that $e\in E^{\prime}\cup E^{\prime\prime}$ or $e$ consists of
two semiedges one of which is included in $S^{\prime}$ and the other in
$S^{\prime\prime},$ and finally let $S^{\prime\prime\prime}$ be a set of
semiedges $s$ such that $s\in S^{\prime}\cup S^{\prime}$ and $s$ is not a half
of an edge from $E^{\prime\prime\prime}.$ A \emph{union} of multipoles
$M^{\prime}$ and $M^{\prime\prime}$ is a multipole $M^{\prime}\cup
M^{\prime\prime}=(V^{\prime\prime\prime},E^{\prime\prime\prime},S^{\prime
\prime\prime}).$

\begin{remark}
Let $M$ be a multipole, and $M^{\prime}$ and $M^{\prime\prime}$ its two
submultipoles as in Figure \ref{Fig_submultipole}. Notice that both
$M^{\prime}$ and $M^{\prime\prime}$ are induced submultipoles of $M,$ where
$M^{\prime}$ is induced by $\{u_{1},u_{2},v_{2}\}$ and $M^{\prime\prime}$ by
$\{u_{3},u_{4},v_{4}\}.$ The union $M^{\prime}\cup M^{\prime\prime}$ is a
multipole $(V^{\prime\prime\prime},E^{\prime\prime\prime},S^{\prime
\prime\prime})$ where
\begin{align*}
V^{\prime\prime\prime}  &  =\{u_{1},u_{2},u_{3},u_{4},v_{2},v_{4}\},\\
E^{\prime\prime\prime}  &  =\{u_{1}u_{2},u_{2}u_{3},u_{3}u_{4},u_{2}%
v_{2},u_{4}v_{4}\},\\
S^{\prime\prime\prime}  &  =\{s_{1},s_{1}^{\prime},s_{2}^{\prime}%
,s_{2}^{\prime\prime},s_{3}^{\prime},s_{4},s_{4}^{\prime},s_{4}^{\prime\prime
}\}.
\end{align*}
The semiedges $s_{2}$ and $s_{3}$ which belong to $M^{\prime}$ and
$M^{\prime\prime},$ respectively, do not belong to the set of semiedges of
$M^{\prime}\cup M^{\prime\prime},$ they form the edge $u_{2}u_{3}$ in
$E^{\prime\prime\prime}$ instead.
\end{remark}

\begin{figure}[h]
\begin{center}
\includegraphics[scale=0.6]{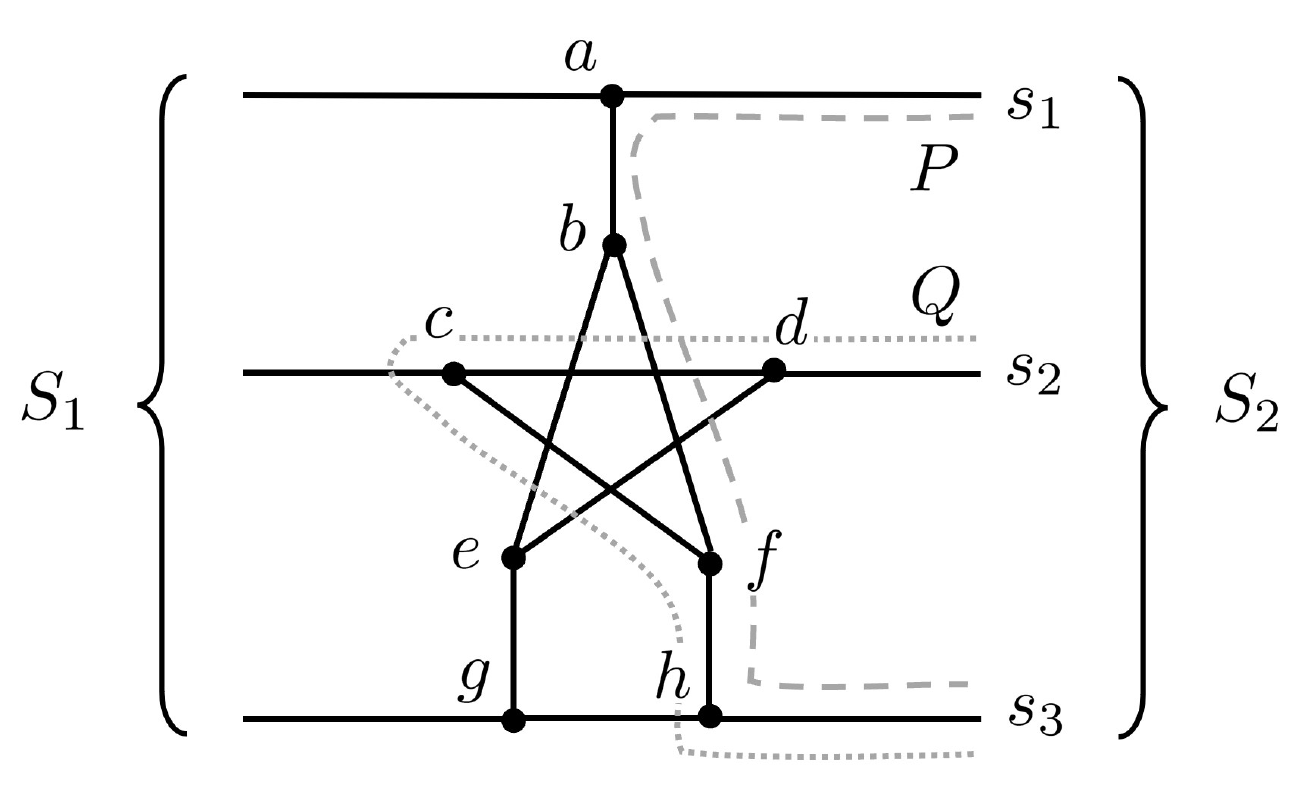}
\end{center}
\caption{Superedge $B$ with two connectors $S_{1}$ and $S_{2}.$ Paths $P$
(dashed) and $Q$ (dotted) starting and ending with semiedges from $S_{2}$ are
also denoted.}%
\label{Fig_superEdge}%
\end{figure}

Let $M$ be a multipole and $M^{\prime}$ and $M^{\prime\prime}$ two
submultipoles of $M.$ Let $\sigma^{\prime}$ and $\sigma^{\prime\prime}$ be a
normal coloring of $M^{\prime}$ and $M^{\prime\prime}$, respectively. We say
that $\sigma^{\prime}$ and $\sigma^{\prime\prime}$ are \emph{compatible}, if
there exists a normal coloring $\sigma$ of $M^{\prime}\cup M^{\prime\prime}$
such that the restriction of $\sigma$ to $E^{\prime}\cup S^{\prime}$ is
$\sigma^{\prime}$ and the restriction to $E^{\prime\prime}\cup S^{\prime
\prime}$ is $\sigma^{\prime\prime}$. If $\sigma^{\prime}$ and $\sigma
^{\prime\prime}$ are compatible, we also say that $\sigma^{\prime}$ is
$\sigma^{\prime\prime}$\emph{-compatible} and vice versa. Notice that there
may exist an edge $e$ in $M^{\prime}\cup M^{\prime\prime}$ such that the half
of $e$ is semiedge in $M^{\prime}$ or $M^{\prime\prime},$ say in $M^{\prime}$,
then the restriction of $\sigma$ to $M^{\prime}$ means that $\sigma
(e)=\sigma^{\prime}(s)$.

Throughout the paper we will consider supervertices $A$ and $A^{\prime}$
illustrated by Figure \ref{Fig_superVert}, and a supervertex $B$ defined as
$(P_{10})_{u_{1},u_{2}}$ where $u_{1}$ and $u_{2}$ is any pair of non-adjacent
vertices of $P_{10}$ and illustrated by Figure \ref{Fig_superEdge}. Assuming
that vertices and semiedges of $B$ are labeled as in Figure
\ref{Fig_superEdge}, we define paths $P$ and $Q$ in $B$ by $P=s_{1}%
,ab,bf,fh,s_{3}$ and $Q=s_{2},dc,cf,fh,s_{3}$. These two paths are also shown
in Figure \ref{Fig_superEdge}. Notice that both $P$ and $Q$ are paths starting
and ending with a semiedge, so they could be Kempe chains for some normal
colorings of $B$.

\begin{figure}[h]
\begin{center}
\includegraphics[scale=0.6]{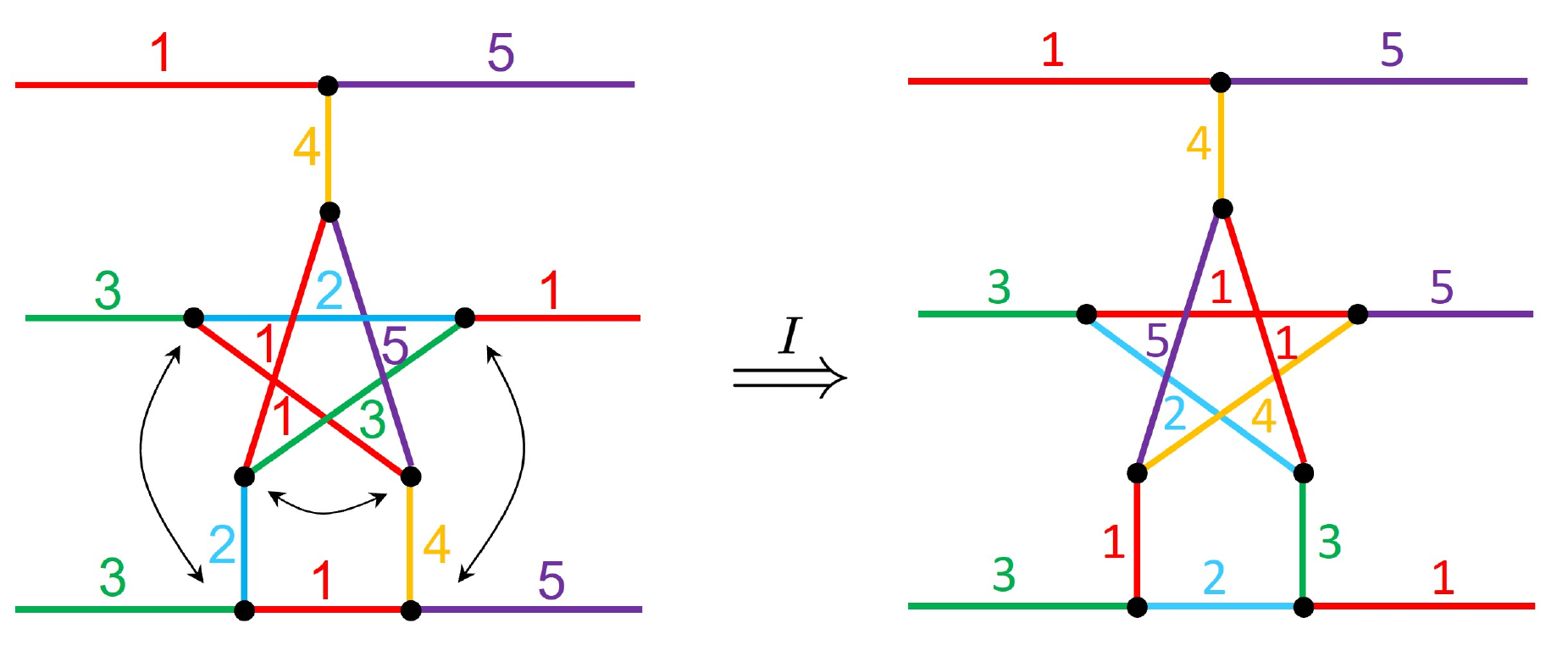}
\end{center}
\caption{The figure shows a normal $5$-coloring $\sigma$ of the superedge $B$
on the left, and the coloring $I(\sigma)$ on the right which is also a normal
$5$-coloring of $B.$}%
\label{Fig_isomorphismB}%
\end{figure}

Notice that in the superedge $B$ the horizontal "level" containing vertices
$c$ and $d$ can be swapped with the horizontal "level" containing vertices $g$
and $h,$ and we will again obtain a superedge $B$ provided that vertices $e$
and $f$ are swapped too. To be more precise, we define an isomorphism $I$ of
$B$ by the following permutation of vertices $(c$ $g)(d$ $h)(e$ $f)$
considered as a product of three transpositions. Note that $I$ is involution,
i.e. $I=I^{-1}.$ If $\sigma$ is a normal coloring of $B$ and we assume that in
the isomorphism $I$ the colors of edges are mapped together with edges, this
yields another normal coloring of $B.$ Formally, for a given normal
$5$-coloring $\sigma$ of $B$ we define an edge-coloring $I(\sigma)$ of $B$ by
$I(\sigma)(x)=\sigma(I^{-1}(x))$ for any $x\in E(B)\cup S(B).$ The following
observation on $I(\sigma)$ is illustrated by Figure \ref{Fig_isomorphismB}.

\begin{remark}
If $\sigma$ is a normal $5$-coloring of $B,$ then $I(\sigma)$ is also a normal
$5$-coloring of $B$.
\end{remark}

\begin{figure}[h]
\begin{center}
\includegraphics[scale=0.6]{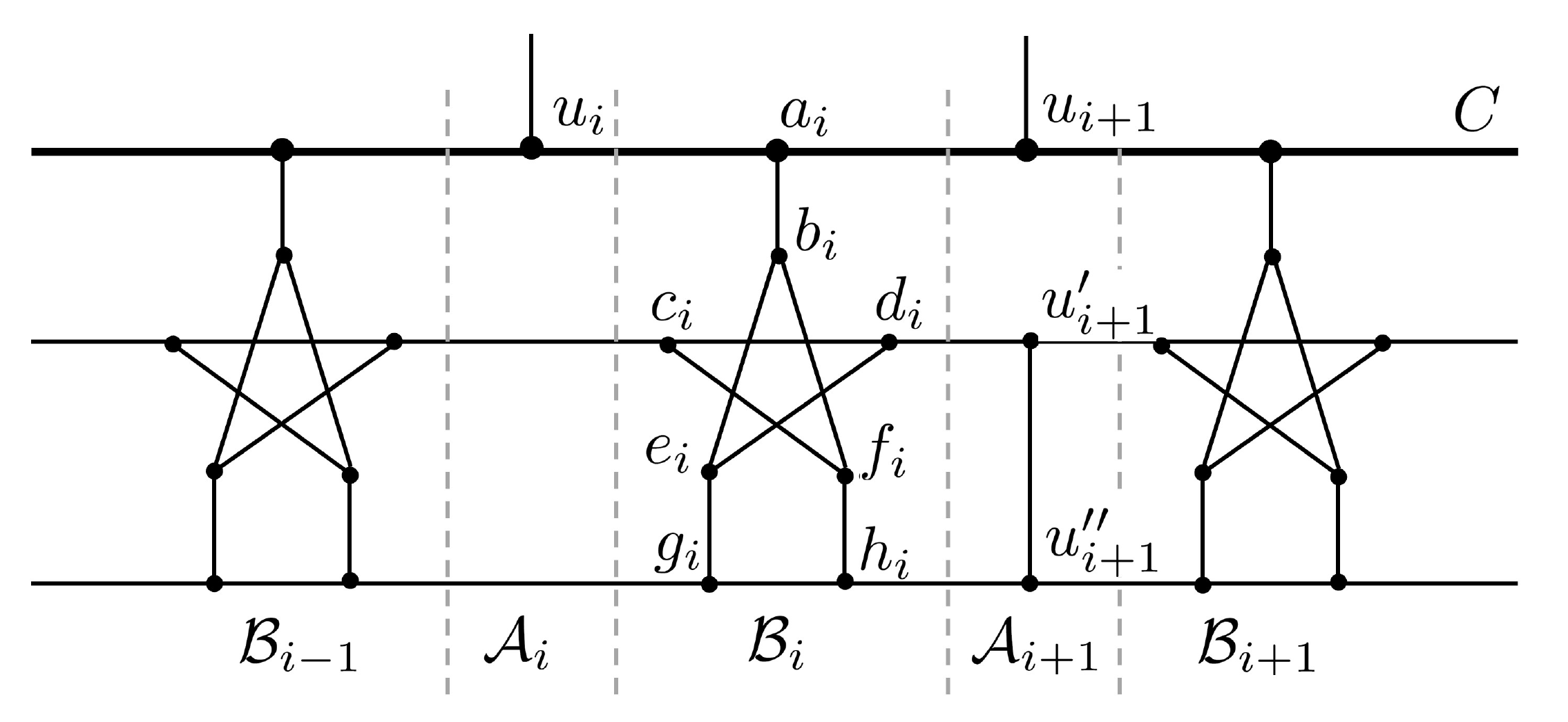}
\end{center}
\caption{Superposition of vertices and edges of a cycle $C$ in $G$ by
supervertices $A$ or $A^{\prime}$ and superedge $B.$}%
\label{Fig_Kochol}%
\end{figure}

Let $G$ be a snark, and $C=u_{0}u_{1}\cdots u_{g-1}u_{0}$ a cycle of length
$g$ in $G.$ Denote by $v_{i}$ the neighbor of $u_{i}$ distinct from $u_{i-1}$
and $u_{i+1},$ for $i=0,\ldots,g-1.$ Notice that it can happen $v_{i}=u_{j}$
for $j\not \in \{i+1,i,i-1\}$ and also $v_{i}=v_{j}$ for $i\not =j.$ Let
$G_{C}(\mathcal{A},\mathcal{B})$ be a superposition of a snark $G$ such that
every vertex from $C$ is superposed by $A$ or $A^{\prime},$ every edge from
$C$ is superposed by $B,$ and all other edges and vertices of $G$ are
superposed by themselves. Moreover, the semiedges of $A$ (resp. $A^{\prime}$)
are identified with semiedges of $B$ as in Figure \ref{Fig_Kochol}. We will
use abbreviated notation $\mathcal{A}_{i}$ for $\mathcal{A}(u_{i})$ and
$\mathcal{B}_{i}$ for $\mathcal{B}(u_{i}u_{i+1}).$ The family of all such
superpositions of $G$ is denoted by $\mathcal{G}_{C}(\mathcal{A}%
,\mathcal{B}).$

Notice that the semiedges of the two $3$-connectors of $\mathcal{A}_{i}$ can
be identified with semiedges of a $3$-connector in $\mathcal{B}_{i-1}$ and
$\mathcal{B}_{i}$ in several ways, and the one from Figure \ref{Fig_Kochol}
which we consider is only one of them. Namely, we may assume that the
connection of $\mathcal{A}_{i}$ with $\mathcal{B}_{i-1}$ and $\mathcal{B}_{i}$
arises so that first each of the three semiedges of $\mathcal{B}_{i-1}$ is
identified with a semiedge from $\mathcal{B}_{i},$ and then one of the three
arising edges is subdivided by $u_{i}.$ Since the identification of semiedges
from $\mathcal{B}_{i-1}$ and $\mathcal{B}_{i}$ can be done in $6$ different
ways, and then the edge to be subdivided by $u_{i}$ can be chosen in $3$
different ways, there are $18$ different ways of connecting supervertex
$\mathcal{A}_{i}$ with superedges $\mathcal{B}_{i-1}$ and $\mathcal{B}_{i}.$

\begin{figure}[h]
\begin{center}
\includegraphics[scale=0.7]{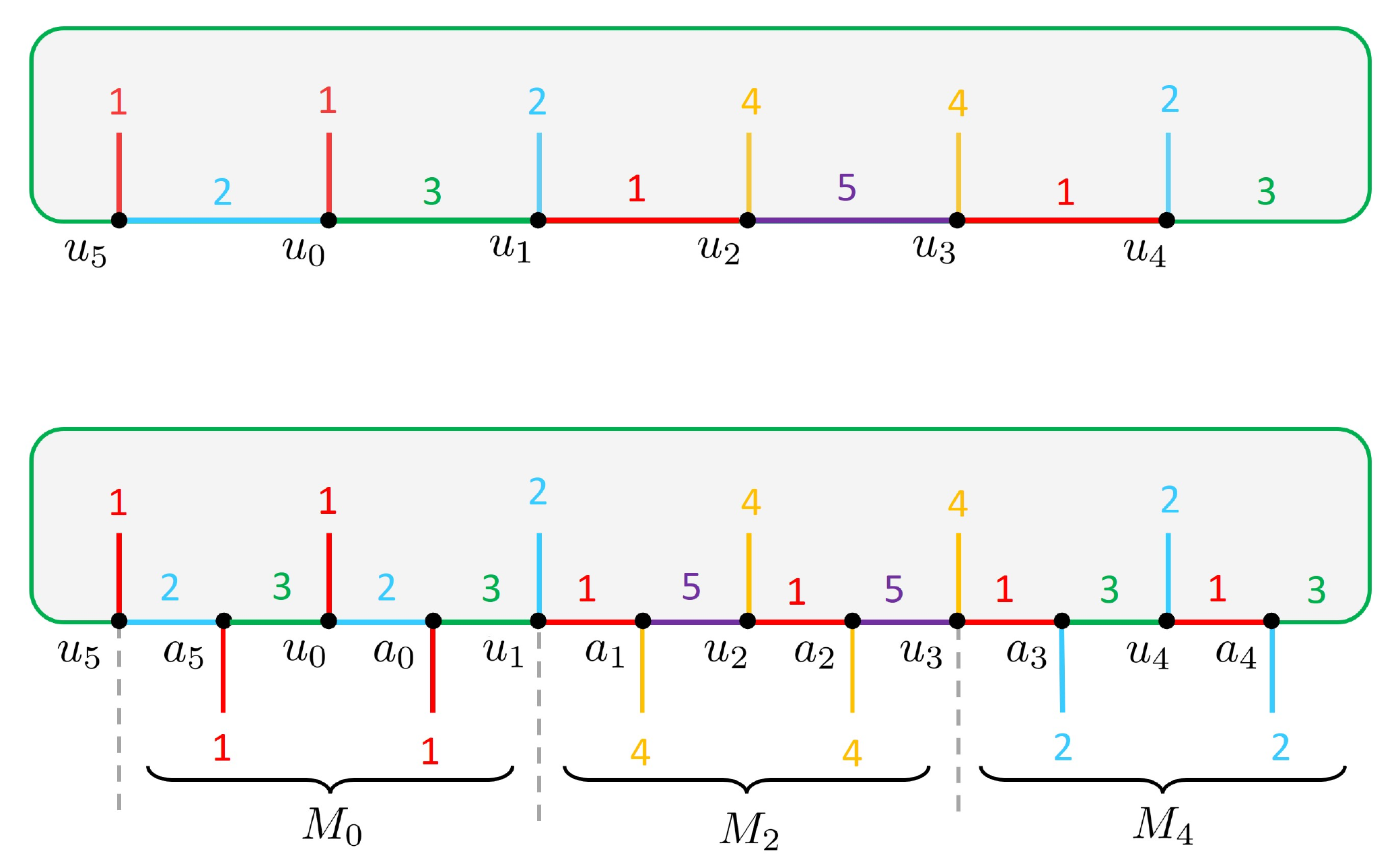}
\end{center}
\caption{A normal coloring $\sigma$ of edges incident to a $6$-cycle $C$ in
$G,$ and a normal coloring $\tilde{\sigma}_{\mathrm{ext}}$ of the multipole
$M_{\mathrm{ext}}$ such that the restriction of $\tilde{\sigma}_{\mathrm{ext}%
}$ to $E(G)\backslash E(C)$ is equal to $\sigma.$}%
\label{Fig_rotation}%
\end{figure}

\begin{remark}
Since $\mathcal{A}_{i}\in\{A,A^{\prime}\}$, $\mathcal{B}_{i}=B$ and a
supervertex $\mathcal{A}_{i}$ is connected to $\mathcal{B}_{i-1}$ and
$\mathcal{B}_{i}$ in one particular way out of 18 possible, it holds that
$\left\vert \mathcal{G}_{C}(\mathcal{A},\mathcal{B})\right\vert =2^{g}.$
\end{remark}

\noindent Note that if all $18$ ways of connecting $\mathcal{A}_{i}$ to
$\mathcal{B}_{i-1}$ and $\mathcal{B}_{i}$ were considered, then $\mathcal{G}%
_{C}(\mathcal{A},\mathcal{B})$ would contain $36^{g}$ superpositions of $G$
(considering them as labeled graphs).

\begin{theorem}
\label{Tm_main}Let $G$ be a snark which has a normal $5$-coloring $\sigma$ and
let $C$ be an even cycle in $G$. Then every $\tilde{G}\in\mathcal{G}%
_{C}(\mathcal{A},\mathcal{B})$ has a normal $5$-coloring $\tilde{\sigma}$ such
that $\tilde{\sigma}(e)=\sigma(e)$ for every $e\in E(G)\backslash E(C)$ and
$\tilde{\sigma}$ has at least $18$ poor edges.
\end{theorem}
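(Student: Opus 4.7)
The strategy is to construct $\tilde\sigma$ locally on each supervertex and superedge, matching the constraints imposed by $\sigma$ on the outside of $C$. Set $c_i=\sigma(u_iv_i)$ for $i=0,\ldots,g-1$; since we require $\tilde\sigma|_{E(G)\setminus E(C)}=\sigma$, the color on the $1$-connector semiedge of $\mathcal{A}_i$ is forced to be $c_i$. I would first tabulate, up to color relabelling, all normal $5$-colorings of $A$ and $A'$ whose $1$-connector semiedge carries a prescribed color, recording the boundary patterns each such coloring induces on its two $3$-connectors. This is a finite case analysis on the fixed $7$-poles $A$ and $A'$.

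For the superedge $B$ the key tools are the isomorphism $I$ and the Kempe chains along $P$ and $Q$. By Observation~\ref{Obs_Kempe}, recolorings along $P$ or $Q$ preserve normality and the set of poor edges, and by the last remark so does the action of $I$. Starting from a single fixed normal $5$-coloring of $B$, these operations produce a family of normal $5$-colorings of $B$ whose pairs of boundary patterns on the two $3$-connectors cover all cases needed in the sequel. A finite check shows that, for every admissible left-side $3$-connector pattern inherited from $\mathcal{A}_i$, at least one coloring of $B$ in this family matches it.

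Walking once around $C$, I would fix a normal $5$-coloring of $\mathcal{A}_i$ whose $1$-connector has color $c_i$, and then choose a coloring of $\mathcal{B}_i$ matching $\mathcal{A}_i$ on its left $3$-connector; this determines a boundary pattern on the right $3$-connector which must be compatible with the already-chosen coloring of $\mathcal{A}_{i+1}$. The main obstacle is global consistency after $g$ steps: each local choice carries a $\mathbb{Z}_2$-valued ambiguity (for instance a Kempe swap along $P$ versus along $Q$, related by $I$), and the accumulated obstruction around $C$ is the sum of these bits modulo $2$. Because $g$ is \emph{even}, this obstruction can always be cleared by a single additional swap, possibly combined with replacing one $\mathcal{A}_i$ of type $A$ by $A'$ or vice versa. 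This parity argument is the heart of the proof, and it explains why the analogous construction fails on odd cycles, consistent with the remark that the normal coloring of $P_{10}$ does not extend to its superposition on a $5$-cycle without changing colors outside.

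For the poor-edge count, each coloring of $B$ used in the construction has a fixed number of poor edges, invariant under $I$ and Kempe swaps along $P$, $Q$ by Observation~\ref{Obs_Kempe}, so it can be read off from a template. Summing this contribution over all $g\geq 4$ superedges $\mathcal{B}_i$ and adding the poor edges produced inside the supervertices $\mathcal{A}_i$ yields a total of at least $18$ poor edges in $\tilde\sigma$, as claimed.
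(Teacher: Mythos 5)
Your proposal assembles the right local ingredients (the involution $I$, Kempe swaps along $P$ and $Q$, a finite case analysis of boundary patterns of $B$, and the extension of $\sigma$ forced on the $1$-connectors), but the step that carries the whole proof --- global consistency around $C$ --- is not actually an argument. You assert that each local choice carries a $\mathbb{Z}_2$-valued ambiguity and that the accumulated obstruction is a sum of bits that vanishes because $g$ is even. Nothing in the setup supports this: the interface between $\mathcal{B}_{i}$ and $\mathcal{A}_{i+1}$ is a triple of \emph{complete colors} (a color together with the pair of colors at the attachment vertex) on three semiedges, and the transition at each cycle vertex ranges over ten essentially different cases (the possible triples $\sigma_t(u_{2i})$ relative to $\sigma_t(u_{2i-2})$), not two. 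You give no reason the obstruction lives in $\mathbb{Z}_2$, no reason each step contributes $1$, and no mechanism for "clearing" it. Worse, your fallback of "replacing one $\mathcal{A}_i$ of type $A$ by $A'$ or vice versa" changes the graph: the theorem must produce a coloring of \emph{every} given $\tilde{G}\in\mathcal{G}_C(\mathcal{A},\mathcal{B})$ with its supervertex types prescribed, so you are not free to alter them.

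The idea you are missing is how evenness is actually used: the paper does not walk around $C$ one superedge at a time and cancel a parity defect; it tiles the cycle by blocks $M_{2i}$, each consisting of \emph{two} consecutive superedges $\mathcal{B}_{2i-1},\mathcal{B}_{2i}$ together with the supervertex $\mathcal{A}_{2i}$ between them (this tiling exists precisely because $g$ is even). Within each block, $\mathcal{B}_{2i}$ receives a fixed $3$-edge-coloring $\kappa$ (all nine edges poor) arranged to be \emph{right-side monochromatic} --- all three outgoing semiedges have the same complete color --- while $\mathcal{B}_{2i-1}$ absorbs the entire transition between the color patterns at $u_{2i-2}$ and $u_{2i}$ via permutations and swaps along $P$ (the ten cases). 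The monochromatic interface is what makes consecutive blocks compatible with essentially no further checking, and it is also what makes the twist/$A'$ modifications via $I$ and $Q$ harmless. This asymmetric treatment of the two superedges in a block is also what the $18$ in the statement rests on: only the even-indexed superedges are guaranteed all-poor (nine poor edges each), and $g\geq 4$ gives at least two of them; your plan to "sum a fixed contribution over all $g$ superedges" is not justified, since the transition superedges $\mathcal{B}_{2i-1}$ may use five colors and contain rich edges. As written, the proposal does not constitute a proof.
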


\begin{proof}
Let $\tilde{G}_{A}\in\mathcal{G}_{C}(\mathcal{A},\mathcal{B})$ be a
superposition of $G$ such that $\mathcal{A}_{i}=A$ for every vertex $u_{i}\in
V(C).$ We will first define a normal $5$-coloring $\tilde{\sigma}_{A}$ of
$\tilde{G}_{A},$ and then we will show that a slight modification of
$\tilde{\sigma}_{A}$ suffices to obtain a normal $5$-coloring of any
$\tilde{G}\in\mathcal{G}_{C}(\mathcal{A},\mathcal{B}).$ Assume that vertices
within $\mathcal{A}_{i}$ and $\mathcal{B}_{i}$ of $\tilde{G}_{A}$ are denoted
as in Figure \ref{Fig_Kochol}.

Denote by $M_{\mathrm{{int}}}$ a submultipole of $\tilde{G}_{A}$ induced by
$V(G)\backslash V(C)\subseteq V(\tilde{G}_{A}).$ Also, let $M_{\mathrm{ext}%
}=(V_{\mathrm{ext}},E_{\mathrm{ext}},S_{\mathrm{ext}})$ be a submultipole of
$\tilde{G}_{A}$ induced by $V(G)\cup\{a_{i}:i=0,\ldots,g-1\}\subseteq
V(\tilde{G}_{A}).$ We define an edge-coloring $\tilde{\sigma}_{\mathrm{ext}}$
of $M_{\mathrm{ext}}$ as follows
\[
\tilde{\sigma}_{\mathrm{ext}}(x)=\left\{
\begin{array}
[c]{ll}%
\sigma(x) & \text{if }x\in E(G)\backslash E(C)\subseteq E_{\mathrm{ext}},\\
\sigma(u_{2i-1}u_{2i}) & \text{if }x\in\{u_{2i-1}a_{2i-1},u_{2i}%
a_{2i}\}\subseteq E_{\mathrm{ext}},\\
\sigma(u_{2i}u_{2i+1}) & \text{if }x\in\{a_{2i-1}u_{2i},a_{2i}u_{2i+1}%
\}\subseteq E_{\mathrm{ext}},\\
\sigma(u_{2i}v_{2i}) & \text{if }x\in\{a_{2i-1}b_{2i-1},a_{2i}b_{2i}%
\}\subseteq S_{\mathrm{ext}}.
\end{array}
\right.
\]
The coloring $\tilde{\sigma}_{\mathrm{ext}}$ of the multipole $M_{\mathrm{ext}%
}$ is illustrated by Figure \ref{Fig_rotation}. Since $M_{\mathrm{int}}$ is a
submultipole of $M_{\mathrm{ext}}$ and $\left.  \tilde{\sigma}_{\mathrm{ext}%
}\right\vert _{M_{\mathrm{int}}}=\left.  \sigma\right\vert _{M_{\mathrm{int}}%
},$ we say that $\tilde{\sigma}_{\mathrm{ext}}$ is the extension of $\left.
\sigma\right\vert _{M_{\mathrm{int}}}$ to $M_{\mathrm{ext}}.$

\bigskip\noindent\textbf{Claim A.} \emph{It holds that }$\tilde{\sigma
}_{\mathrm{ext}}$\emph{ is a normal }$5$\emph{-coloring of }$M_{\mathrm{ext}%
}.$

\medskip\noindent Let us establish that $\tilde{\sigma}_{\mathrm{ext}}$ is a
proper $5$-coloring. Notice that for any vertex $v\in V(G)\subseteq
V(M_{\mathrm{ext}})$ it holds that $\tilde{\sigma}_{\mathrm{ext}}%
(v)=\sigma(v).$ Also, notice that $V(M_{\mathrm{ext}})\backslash
V(G)=\{a_{i}:i=0,\ldots,g-1\},$ and it holds that $\tilde{\sigma
}_{\mathrm{ext}}(a_{2i-1})=\tilde{\sigma}_{\mathrm{ext}}(a_{2i})=\sigma
(u_{2i})$ for every $0\leq i\leq g/2.$ Thus, $\tilde{\sigma}_{\mathrm{ext}}$
is proper since $\sigma$ is proper, and it uses the same colors as $\sigma$.
It remains to prove that $\tilde{\sigma}_{\mathrm{ext}}$ is normal. If $e=uv$
is an edge of $M_{\mathrm{ext}}$ which is not incident to vertices $u_{i}$ of
the cycle $C,$ notice that $\sigma(e)=\tilde{\sigma}_{\mathrm{ext}}(e),$
$\sigma(u)=\tilde{\sigma}_{\mathrm{ext}}(u)$ and $\sigma(v)=\tilde{\sigma
}_{\mathrm{ext}}(v).$ Hence, since $e$ is normal by $\sigma$ in $G$ it follows
that $e$ is normal by $\tilde{\sigma}_{\mathrm{ext}}$ in $M_{\mathrm{ext}}.$

If $e=u_{2i}a_{2i}$ or $a_{2i-1}u_{2i},$ then $\sigma_{\mathrm{ext}}%
(a_{2i-1})=\sigma_{\mathrm{ext}}(u_{2i})=\sigma_{\mathrm{ext}}(a_{2i}%
)=\sigma(u_{2i})$ implies that $e$ is poor by $\tilde{\sigma}_{\mathrm{ext}}$
in $M_{\mathrm{ext}}.$

If $e=a_{2i}u_{2i+1}$ (resp. $e=u_{2i-1}a_{2i-1}$), notice that $\sigma
(e)=\sigma(u_{2i}u_{2i+1})$ (resp. $\sigma(e)=\sigma(u_{2i-1}u_{2i})$) and as
for end-vertices of $e$ it holds that $\tilde{\sigma}_{\mathrm{ext}}%
(a_{2i})=\sigma(u_{2i})$ and $\tilde{\sigma}_{\mathrm{ext}}(u_{2i+1}%
)=\sigma(u_{2i+1})$ (resp. $\tilde{\sigma}_{\mathrm{ext}}(a_{2i-1}%
)=\sigma(u_{2i})$ and $\tilde{\sigma}_{\mathrm{ext}}(u_{2i-1})=\sigma
(u_{2i-1})$), thus $e$ is normal by $\tilde{\sigma}_{\mathrm{ext}}$ in
$M_{\mathrm{ext}}$ because $u_{2i}u_{2i+1}$ (resp. $u_{2i-1}u_{2i}$) is normal
by $\sigma$ in $G,$ which establishes the claim.

\medskip

Now, for a semiedge $s$ incident to a vertex $v$ and a normal edge-coloring
$\sigma$ of a cubic multipole $M$, the \emph{complete color} $\sigma
^{\mathrm{c}}(s)$ of $s$ is defined by $\sigma^{\mathrm{c}}(s)=(i,\{j,k\})$
where $i=\sigma(s)$ and $\{j,k\}$ is the set of the two colors of the
remaining two (semi)edges incident to $v$. Let $M_{i}$ denote the submultipole
of $\tilde{G}_{A}$ induced by $V(\mathcal{B}_{i-1})\cup V(\mathcal{A}_{i})\cup
V(\mathcal{B}_{i}),$ see Figure \ref{Fig_Kochol}. Let $\tilde{\sigma}_{i}$ be
a normal $5$-coloring of $M_{i}$ which is $\tilde{\sigma}_{\mathrm{ext}}%
$-compatible. Denote by $s_{1},$ $s_{2}$ and $s_{3}$ the semiedges of $M_{i}$
incident to vertices $a_{i},$ $d_{i}$ and $h_{i}$, respectively. We say
$\tilde{\sigma}_{i}$ is \emph{right-side monochromatic} if the complete color
of all three semiedges $s_{i}$ is the same, i.e. $\tilde{\sigma}%
_{i}^{\mathrm{c}}(s_{1})=\tilde{\sigma}_{i}^{\mathrm{c}}(s_{2})=\tilde{\sigma
}_{i}^{\mathrm{c}}(s_{3}).$ Further, we say that $\tilde{\sigma}_{i}$ is
\emph{left-side compatible} if it is compatible with a right-side
monochromatic $\tilde{\sigma}_{\mathrm{ext}}$-compatible normal $5$-coloring
of $M_{i-2}.$

Notice the following, if every $M_{2i}$ is colored by a normal $5$-coloring
$\tilde{\sigma}_{2i}$ which is $\tilde{\sigma}_{\mathrm{ext}}$-compatible,
then we obtain a normal $5$-coloring of $M_{\mathrm{ext}}\cup M_{2i}$.
Further, if $\tilde{\sigma}_{2i}$ is also right-side monochromatic and
left-side compatible for every $i=0,\ldots,g/2,$ then every $\tilde{\sigma
}_{2i}$ is compatible with $\tilde{\sigma}_{2i-2}$. Thus combining
$\tilde{\sigma}_{\mathrm{ext}}$ with $\tilde{\sigma}_{2i}$ for all
$i=0,\ldots,g/2$ would yield a normal $5$-coloring of $\tilde{G}.$ Hence, we
first have to establish that every $M_{2i}$ indeed has a normal $5$-coloring,
which is $\tilde{\sigma}_{\mathrm{ext}}$-compatible, right-side monochromatic
and left-side compatible.

\begin{figure}[ph]
\begin{center}%
\begin{tabular}
[t]{llll}%
a) & \raisebox{-0.9\height}{\includegraphics[scale=0.45]{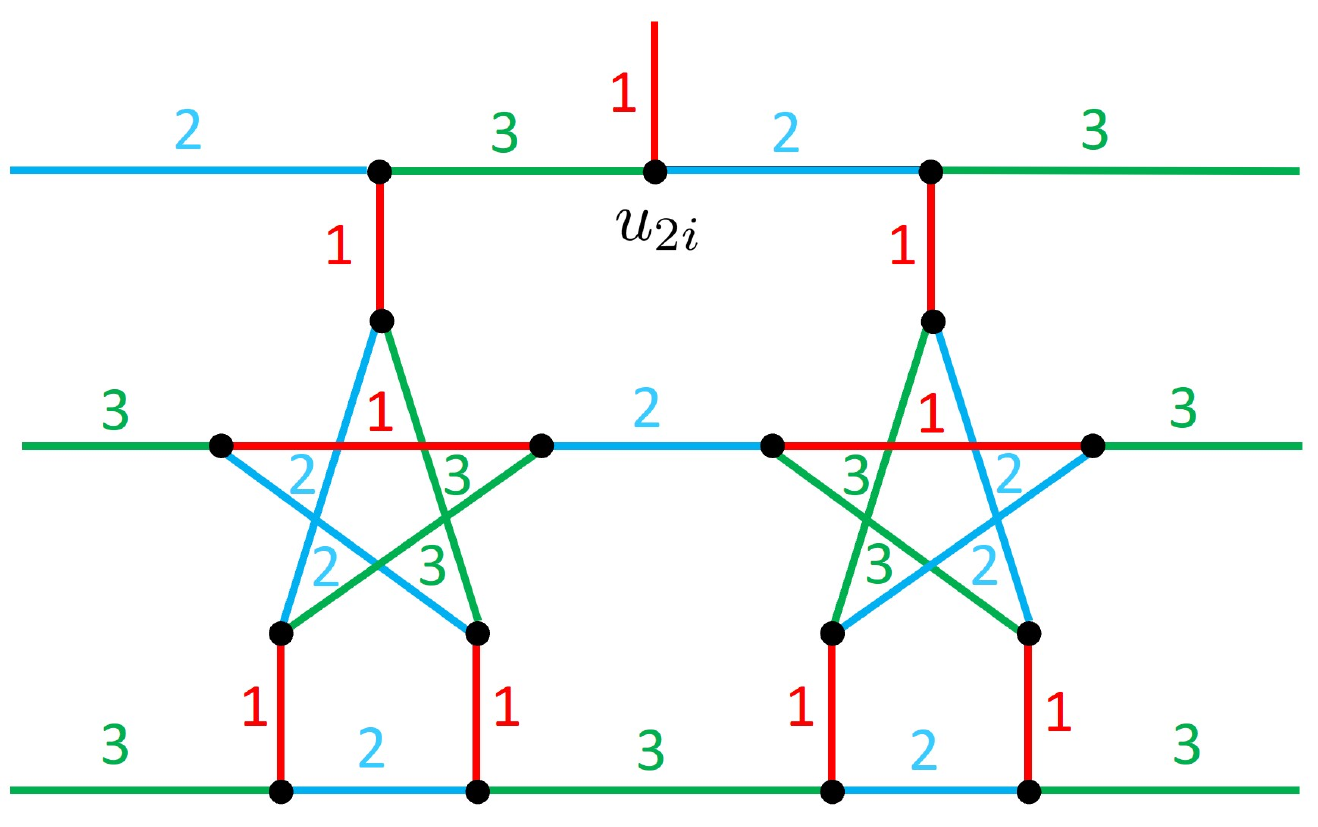}} & b) &
\raisebox{-0.9\height}{\includegraphics[scale=0.45]{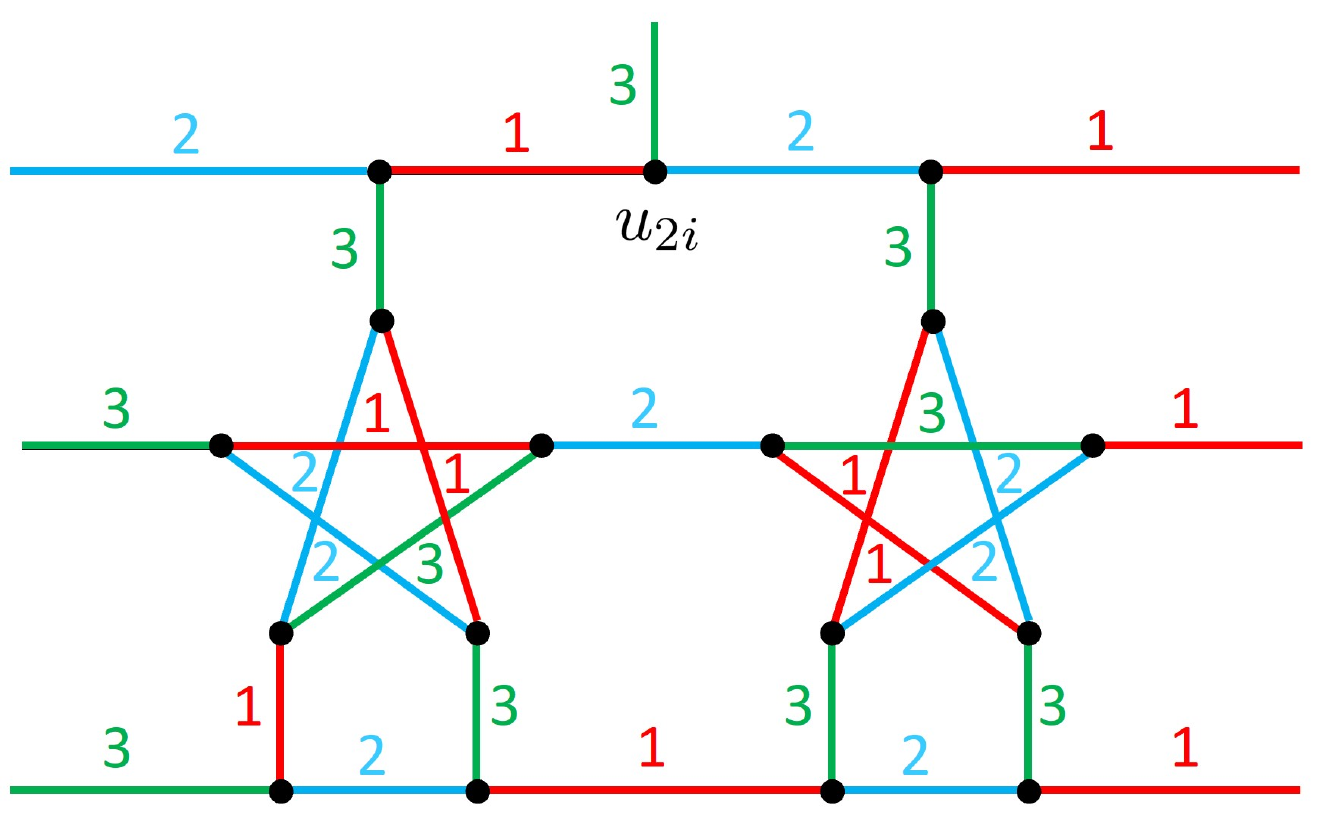}}\\
c) & \raisebox{-0.9\height}{\includegraphics[scale=0.45]{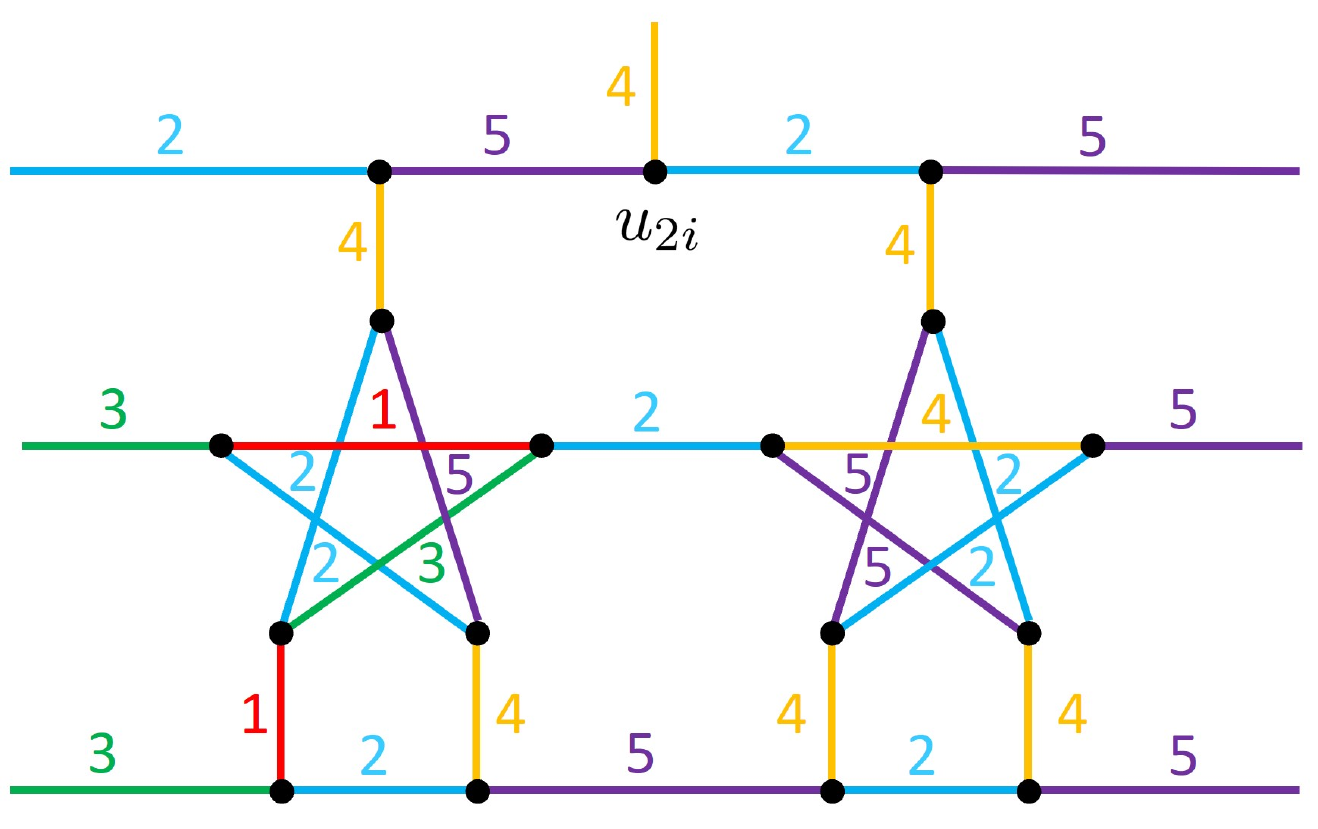}} & d) &
\raisebox{-0.9\height}{\includegraphics[scale=0.45]{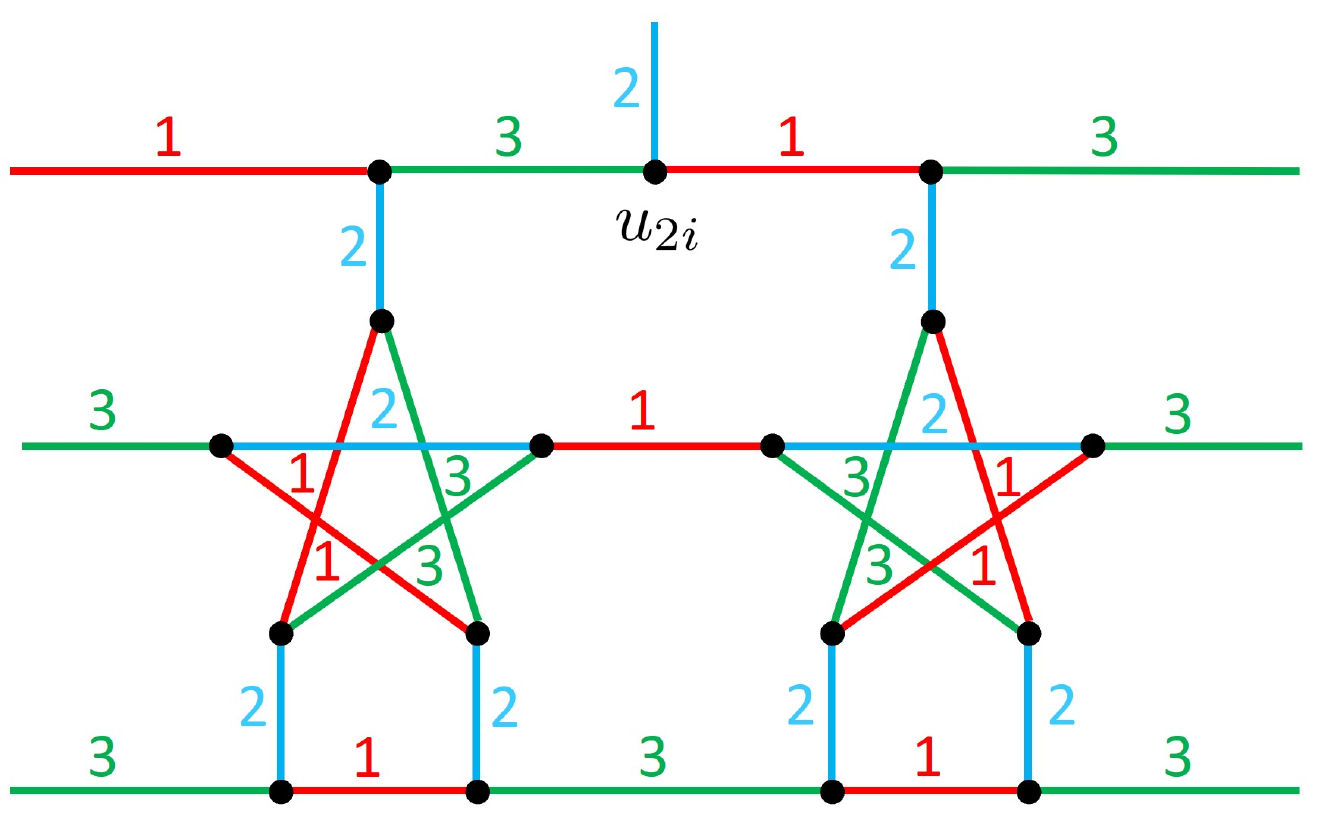}}\\
e) & \raisebox{-0.9\height}{\includegraphics[scale=0.45]{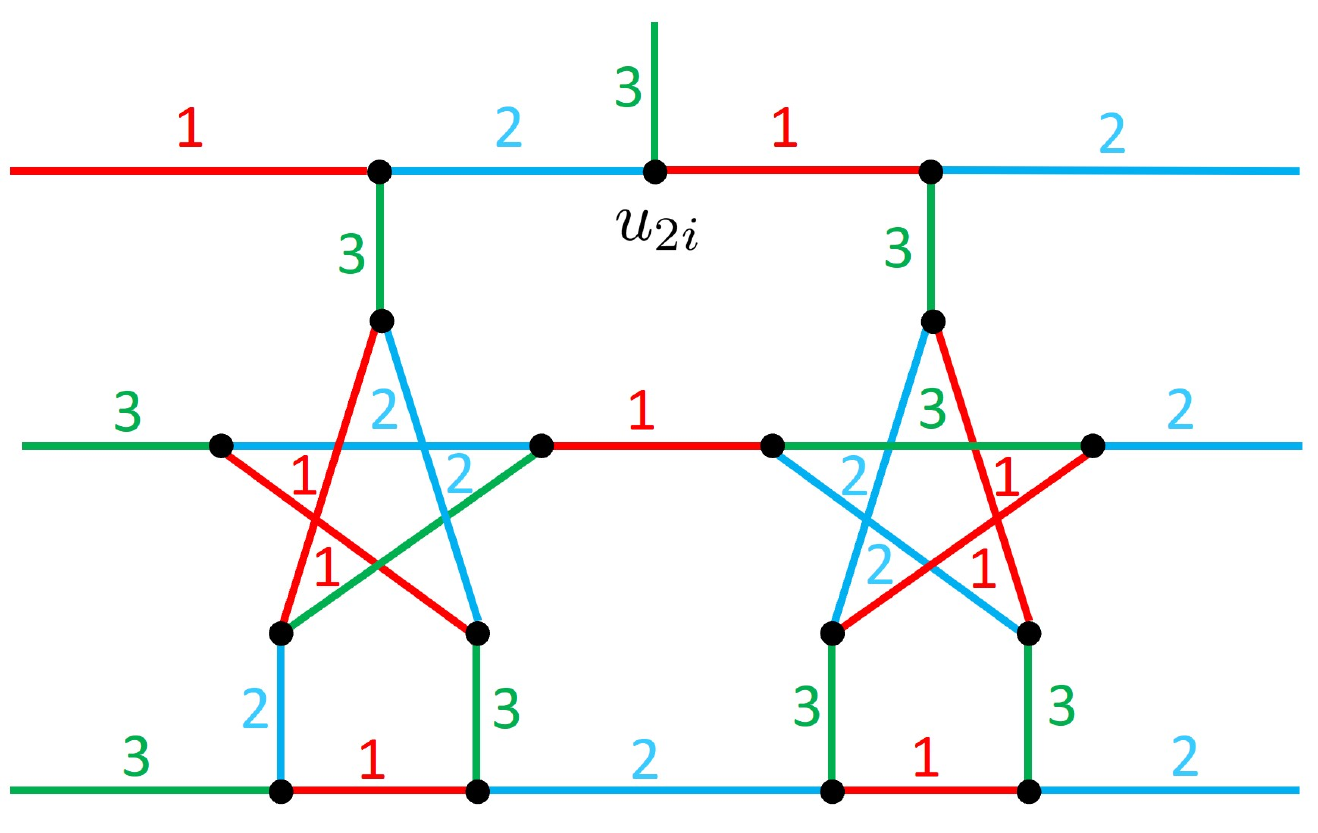}} & f) &
\raisebox{-0.9\height}{\includegraphics[scale=0.45]{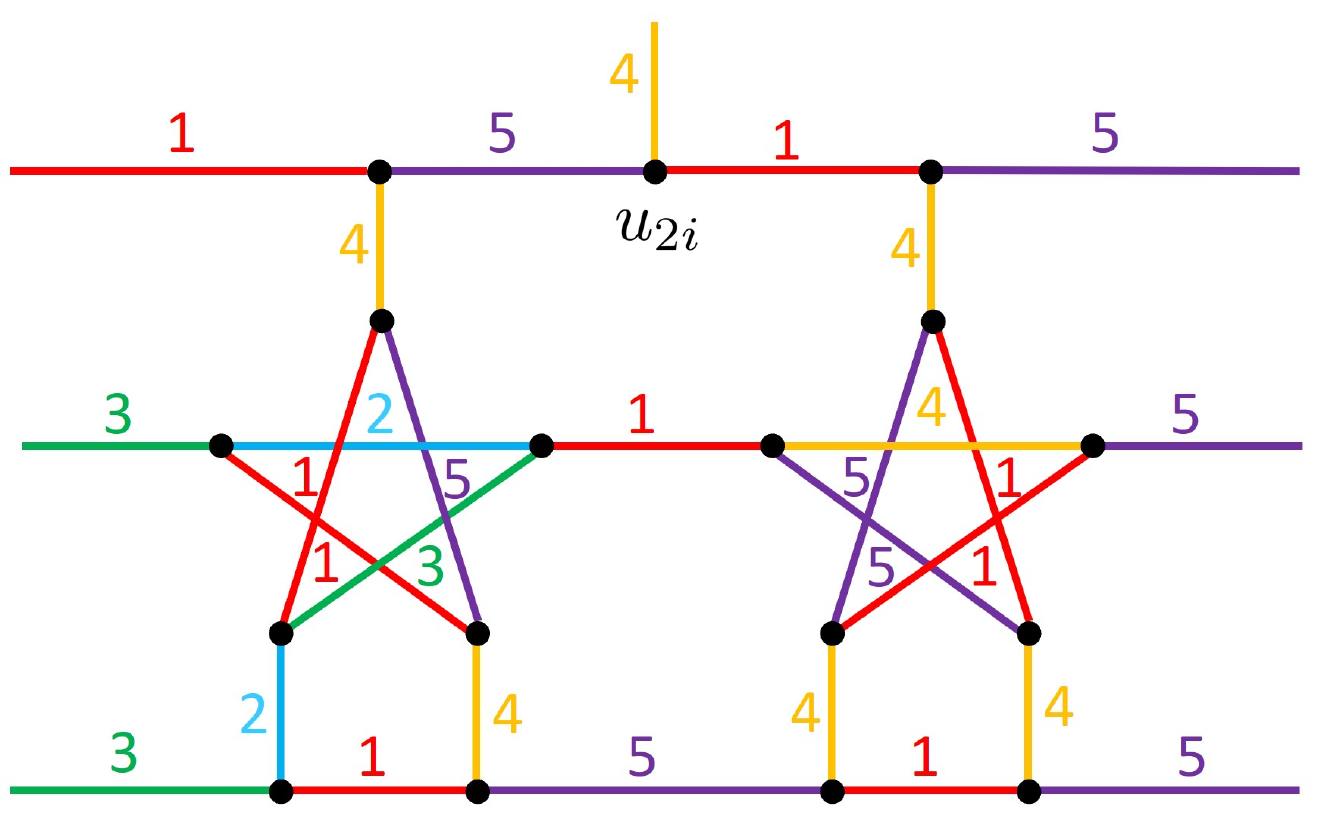}}\\
g) & \raisebox{-0.9\height}{\includegraphics[scale=0.45]{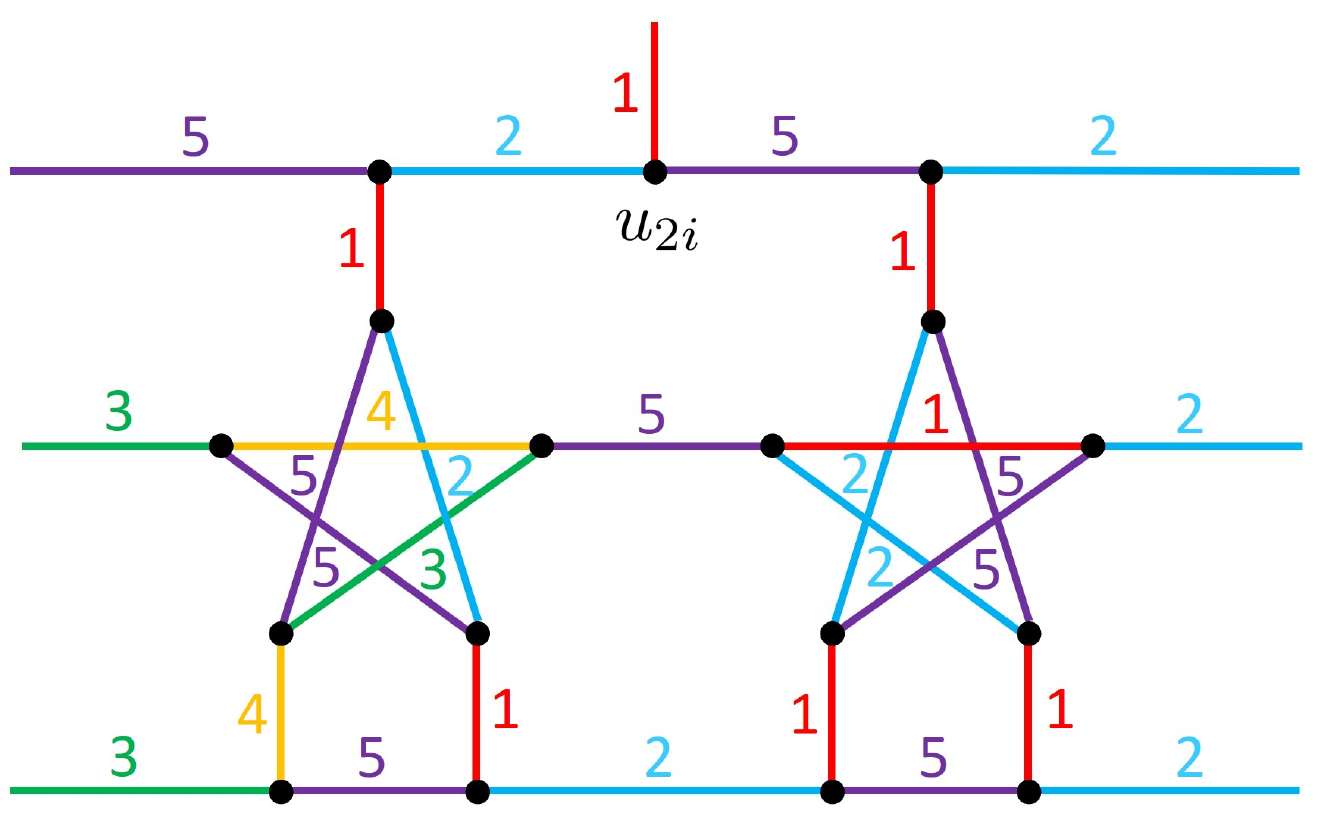}} & h) &
\raisebox{-0.9\height}{\includegraphics[scale=0.45]{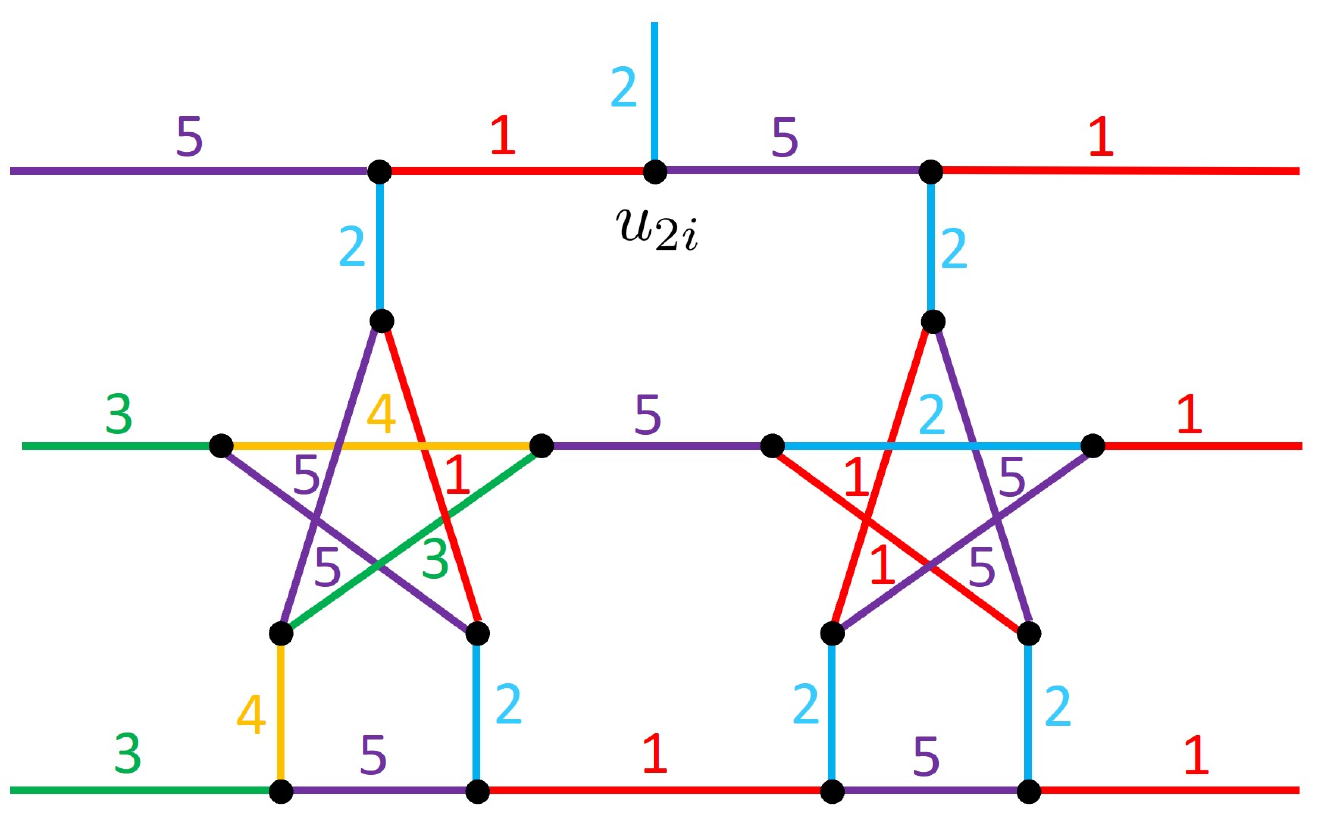}}\\
i) & \raisebox{-0.9\height}{\includegraphics[scale=0.45]{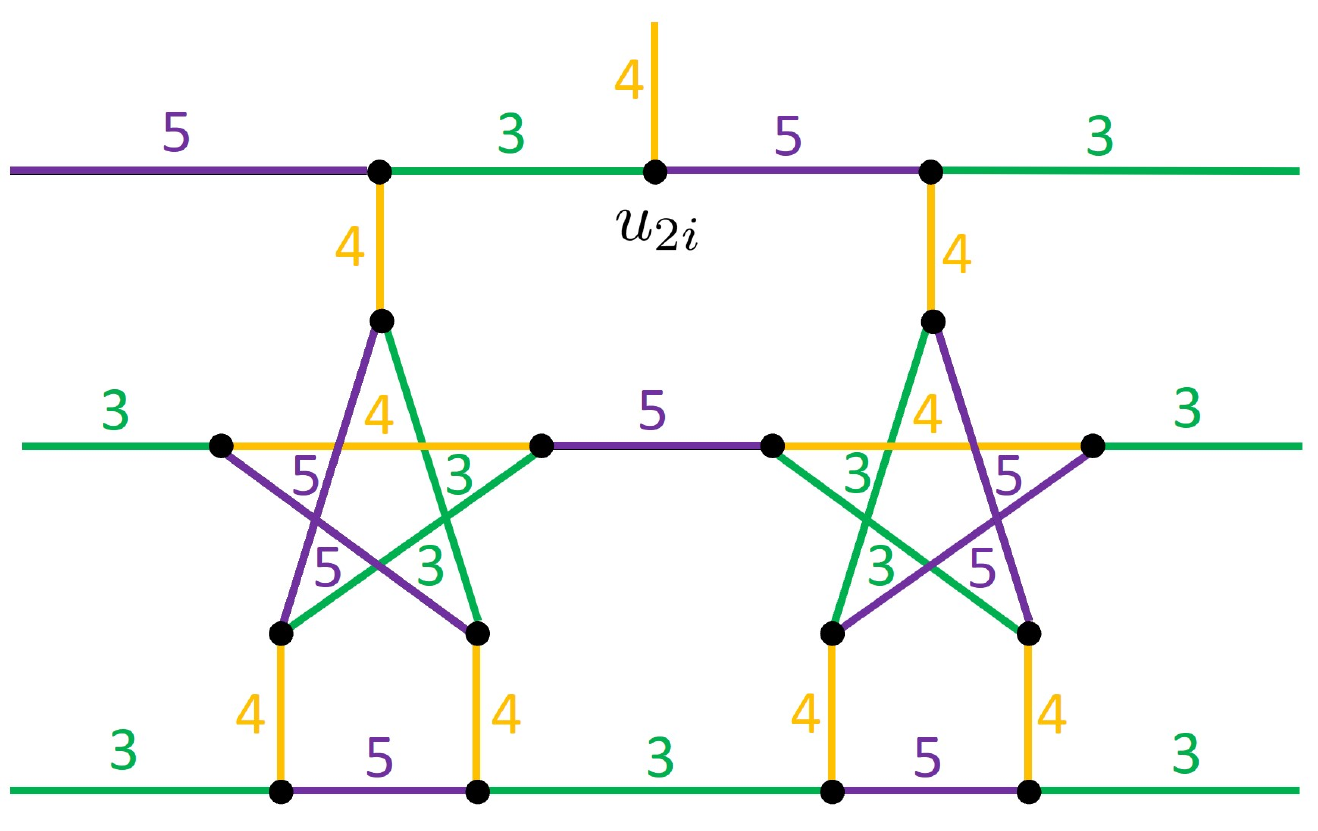}} & j) &
\raisebox{-0.9\height}{\includegraphics[scale=0.45]{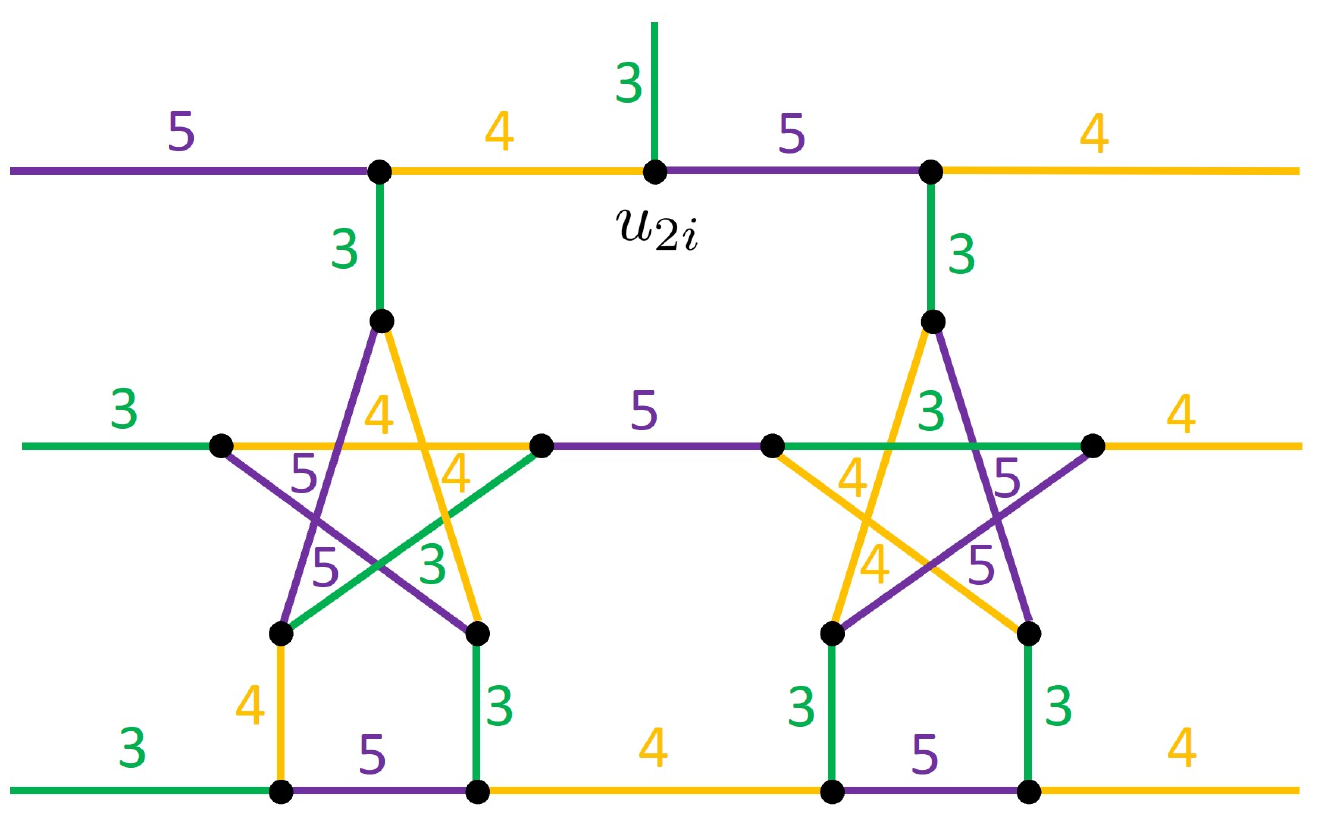}}
\end{tabular}
\end{center}
\caption{The coloring $\tilde{\sigma}_{2i}$ of $M_{2i}$ for $\sigma_{t}%
(u_{2i})$ equal respectively to: a) $(1,2,3),$ b) $(3,2,1),$ c) $(4,2,5),$ d)
$(2,1,3),$ e) $(3,1,2),$ f) $(4,1,5),$ g) $(1,5,2),$ h) $(2,5,1),$ i)
$(4,5,3),$ j) $(3,5,4).$}%
\label{Fig_tenDoubleStars}%
\end{figure}

\bigskip\noindent\textbf{Claim B.} \emph{For every }$i=0,\ldots,g/2$\emph{,
there exists a normal }$5$-coloring\emph{ }$\tilde{\sigma}_{2i}$\emph{ of
}$M_{2i}$\emph{ with all the following properties:}

\begin{enumerate}
\item[i)] $\tilde{\sigma}_{2i}$\emph{ is }$\tilde{\sigma}_{\mathrm{ext}}%
$\emph{-compatible;}

\item[ii)] $\tilde{\sigma}_{2i}$\emph{ is right-side monochromatic in colors
}$(\sigma(u_{2i}u_{2i+1}),\{\sigma(u_{2i}v_{2i}),\sigma(u_{2i-1}u_{2i}%
)\})$\emph{ of the corresponding edges of cycle }$C$\emph{ in }$G;$

\item[iii)] $\tilde{\sigma}_{2i}$\emph{ is left-side compatible.}
\end{enumerate}

\medskip\noindent For a vertex $u_{2i}\in V(C),$ let us denote $\sigma
_{t}(u_{2i})=(\sigma(u_{2i}v_{2i}),\sigma(u_{2i-1}u_{2i}),\sigma
(u_{2i}u_{2i+1})).$ We may assume that $\sigma_{t}(u_{2i-2})=(1,2,3),$ and
since $\sigma$ is a normal $5$-coloring this further implies $\sigma
_{t}(u_{2i})\in T$ where (up to permutation of colors $4$ and $5$) it holds
that%
\begin{align*}
T  &  =\{(1,2,3),(3,2,1),(4,2,5),(2,1,3),(3,1,2),(4,1,5),\\
&  (1,5,2),(2,5,1),(4,5,3),(3,5,4)\}.
\end{align*}
If $\sigma_{t}(u_{2i})=(1,2,3),$ then $\tilde{\sigma}_{2i}$ of $M_{2i}$ is
defined as in Figure \ref{Fig_tenDoubleStars}.a). Denote by $\kappa$ the
restriction of $\tilde{\sigma}_{2i}$ to $\mathcal{B}_{2i}$ and by $\tau$ the
restriction of $\tilde{\sigma}_{2i}$ to $\mathcal{B}_{2i-1}.$ Notice that both
$\kappa$ and $\tau$ are normal colorings of the superedge $B$ which use only
colors $1,$ $2$ and $3.$

Next, for any other triple $\sigma_{t}(u_{2i})\in T,$ the coloring
$\tilde{\sigma}_{2i}$ of $M_{2i}$ will on $\mathcal{B}_{2i}$ be defined as a
color permutation of $\kappa$, and on $\mathcal{B}_{2i-1}$ as a modification
of $\tau$ obtained by color permutation and/or color swapping along the Kempe
chain $P$. Namely, if $\sigma_{t}(u_{2i})=(t_{1},t_{2},t_{3})\in T,$ then
$\left.  \tilde{\sigma}_{2i}\right\vert _{\mathcal{B}_{2i}}=\kappa^{\prime}$
where $\kappa^{\prime}$ is a $3$-edge-coloring of $B$ obtained from $\kappa$
by replacing color $i$ by $t_{i}$ for $i=1,2,3.$ It remains to define the
restriction of $\tilde{\sigma}_{2i}$ to $\mathcal{B}_{2i-1}.$ We distinguish
the following cases illustrated by Figure \ref{Fig_tenDoubleStars}:

\begin{itemize}
\item[a)] if $\sigma_{t}(u_{2i})=(1,2,3)$, then $\left.  \tilde{\sigma}%
_{2i}\right\vert _{\mathcal{B}_{2i-1}}=\tau;$

\item[b)] if $\sigma_{t}(u_{2i})=(3,2,1)$, then $\left.  \tilde{\sigma}%
_{2i}\right\vert _{\mathcal{B}_{2i-1}}=\tau^{\prime}$ where $\tau^{\prime}$ is
obtained from $\tau$ by swapping colors $1$ and $3$ along $P;$

\item[c)] if $\sigma_{t}(u_{2i})=(4,2,5)$, then $\left.  \tilde{\sigma}%
_{2i}\right\vert _{\mathcal{B}_{2i-1}}=\tau^{\prime}$ where $\tau^{\prime}$ is
obtained from $\tau$ by replacing $1$ with $4$ and $3$ with $5$ along $P;$

\item[d)] if $\sigma_{t}(u_{2i})=(2,1,3)$, then $\left.  \tilde{\sigma}%
_{2i}\right\vert _{\mathcal{B}_{2i-1}}=\tau^{\prime}$ where $\tau^{\prime}$ is
obtained from $\tau$ by swapping $1$ and $2$ in whole $\mathcal{B}_{2i-1};$

\item[e)] if $\sigma_{t}(u_{2i})=(3,1,2)$, then $\left.  \tilde{\sigma}%
_{2i}\right\vert _{\mathcal{B}_{2i-1}}=\tau^{\prime}$ where $\tau^{\prime}$ is
obtained from $\tau$ by first swapping $1$ and $2$ in whole $\mathcal{B}%
_{2i-1}$ and then swapping $2$ and $3$ along $P$;

\item[f)] if $\sigma_{t}(u_{2i})=(4,1,5)$, then $\left.  \tilde{\sigma}%
_{2i}\right\vert _{\mathcal{B}_{2i-1}}=\tau^{\prime}$ where $\tau^{\prime}$ is
obtained from $\tau$ by first swapping $1$ and $2$ in $\mathcal{B}_{2i-1}$ and
then replacing $2$ by $4$ and $3$ by $5$ along $P;$

\item[g)] if $\sigma_{t}(u_{2i})=(1,5,2)$, then $\left.  \tilde{\sigma}%
_{2i}\right\vert _{\mathcal{B}_{2i-1}}=\tau^{\prime}$ where $\tau^{\prime}$ is
obtained from $\tau$ by first replacing $1$ with $4$ and $2$ with $5$ in
$\mathcal{B}_{2i-1},$ and then replacing $3$ with $2$ and $4$ with $1$ along
$P$;

\item[h)] if $\sigma_{t}(u_{2i})=(2,5,1)$, then $\left.  \tilde{\sigma}%
_{2i}\right\vert _{\mathcal{B}_{2i-1}}=\tau^{\prime}$ where $\tau^{\prime}$ is
obtained from $\tau$ by first replacing $1$ with $4$ and $2$ with $5$ in
$\mathcal{B}_{2i-1},$ and then replacing $3$ with $1$ and $4$ with $2$ along
$P$;

\item[i)] if $\sigma_{t}(u_{2i})=(4,5,3)$, then $\left.  \tilde{\sigma}%
_{2i}\right\vert _{\mathcal{B}_{2i-1}}=\tau^{\prime}$ where $\tau^{\prime}$ is
obtained from $\tau$ by replacing $1$ with $4$ and $2$ with $5$ in
$\mathcal{B}_{2i-1}$;

\item[j)] if $\sigma_{t}(u_{2i})=(3,5,4)$, then $\left.  \tilde{\sigma}%
_{2i}\right\vert _{\mathcal{B}_{2i-1}}=\tau^{\prime}$ where $\tau^{\prime}$ is
obtained from $\tau$ by first replacing $1$ with $4$ and $2$ with $5$ in
$\mathcal{B}_{2i-1}$, and then swapping colors $3$ and $4$ along $P.$
\end{itemize}

\noindent Notice that $\tilde{\sigma}_{2i}$ is $\tilde{\sigma}_{\mathrm{ext}}%
$-compatible since we have chosen $\sigma_{t}(u_{2i})\in T,$ where $T$
contains only those colors of edges incident to $u_{2i}$ which are compatible
with $\sigma_{t}(u_{2i-2})$ in $G,$ and then we colored (semi)edges shared by
$M_{2i}$ and $M_{\mathrm{ext}}$ the same way in $\tilde{\sigma}_{2i}$ as in
$\tilde{\sigma}_{\mathrm{ext}}$, as can bee seen by comparing Figures
\ref{Fig_tenDoubleStars} and \ref{Fig_rotation}. Also, it is obvious from
Figure \ref{Fig_tenDoubleStars} that $\tilde{\sigma}_{2i}$ is right-side
monochromatic for every $\sigma_{t}(u_{2i})\in T.$

\begin{figure}[h]
\begin{center}%
\begin{tabular}
[t]{ll}%
a) & \raisebox{-0.9\height}{\includegraphics[scale=0.45]{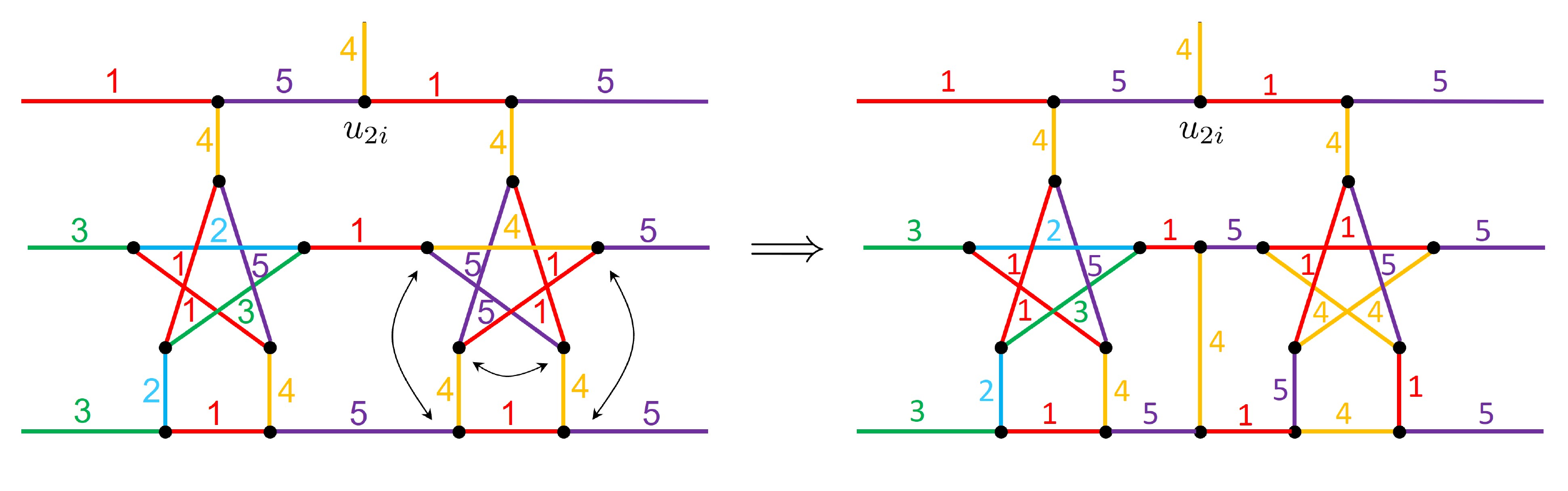}}\\
b) & \raisebox{-0.9\height}{\includegraphics[scale=0.45]{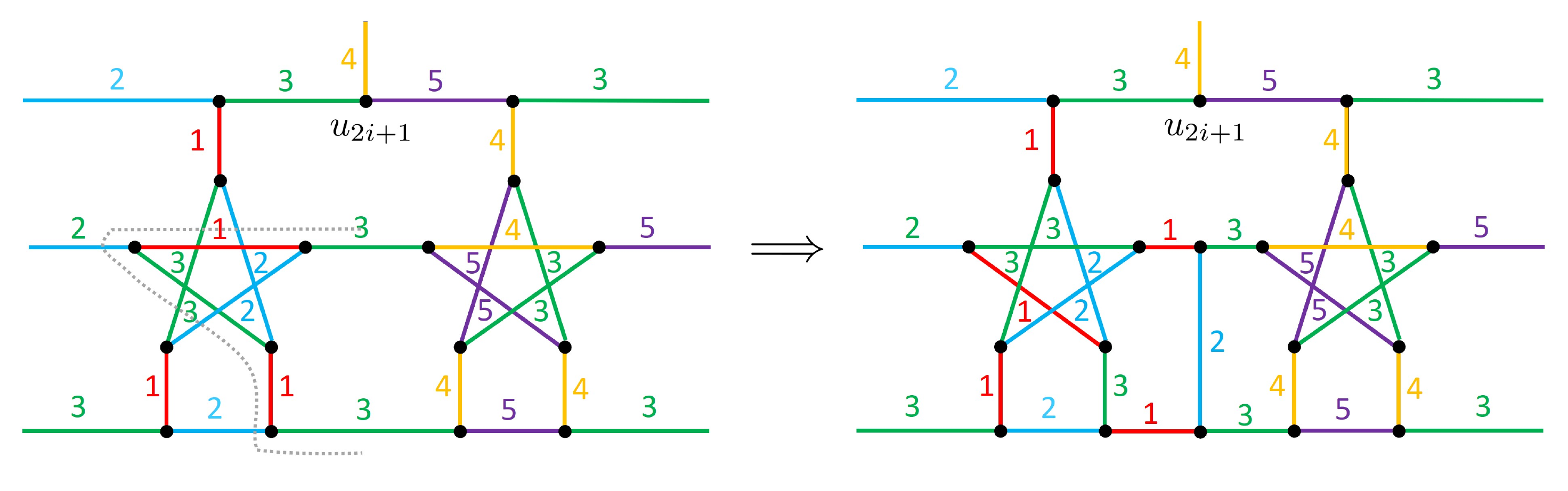}}
\end{tabular}
\end{center}
\caption{A modification of the coloring $\tilde{\sigma}_{A}$ when: a)
$\mathcal{A}_{2i}=A^{\prime}$, b) $\mathcal{A}_{2i+1}=A^{\prime}.$ In case a)
the ispomorphism $I$ is applied to $\left.  \tilde{\sigma}_{0}\right\vert
_{\mathcal{B}_{2i}},$ and in the case b) colors $1$ and $3$ are swapped along
the Kempe $(1,3)$-chain $Q$ in $\mathcal{B}_{2i-1}.$}%
\label{Fig_modification}%
\end{figure}

It remains to establish that every $\tilde{\sigma}_{2i}$ is also left-side
compatible, where for the semiedge of $M_{2i}$ incident to $a_{2i-1}$ this
follows from $\tilde{\sigma}_{2i}$ being $\tilde{\sigma}_{\mathrm{ext}}%
$-compatible, and for the remaining two semiedges on the left side this is
obvious from Figure \ref{Fig_tenDoubleStars} (it is sufficient to check the
compatibility of the left side of $M_{2i}$ for each case with the right-side
of case a), since in the case a) it holds that $\sigma_{t}(u_{2i})=(1,2,3)$ as
we assumed for $\sigma_{t}(u_{2i-2})$). Thus, we conclude that $\tilde{\sigma
}_{2i}$ defined in this way is indeed $\tilde{\sigma}_{\mathrm{ext}}%
$-compatible, right-side monochromatic and left-side compatible normal
$5$-coloring of a multipole $M_{2i}$.

Finally, recall that we assumed $\sigma_{t}(u_{2i-2})=(1,2,3),$ and then we
obtained the desired coloring of $M_{2i}$ under that assumption. It remains to
note that in the case of $\sigma_{t}(u_{2i-2})=(t_{1},t_{2},t_{3}),$ the
desired coloring of $M_{2i}$ is obtained by further applying a color
permutation which maps color $i$ to $t_{i}$ for $i=1,2,3.$ This concludes the
proof of Claim B.\medskip

We are now in a position to define a normal $5$-coloring $\tilde{\sigma}_{A}$
of $\tilde{G}_{A}.$ Namely, $\tilde{\sigma}_{A}$ is defined by $\left.
\tilde{\sigma}_{A}\right\vert _{M_{\mathrm{ext}}}=\tilde{\sigma}%
_{\mathrm{ext}}$ and $\left.  \tilde{\sigma}_{A}\right\vert _{M_{2i}}%
=\tilde{\sigma}_{2i}$ for every $i=0,\ldots,g/2.$ In order to establish that
$\tilde{\sigma}_{A}$ is well defined normal $5$-coloring, the edges on the
border of $M_{\mathrm{ext}}$ and each $M_{2i}$ have to be considered, and also
edges which connect $M_{2i-2}$ and $M_{2i}$ for every $i=0,\ldots,g/2.$ Since
every $\tilde{\sigma}_{2i}$ is $\tilde{\sigma}_{\mathrm{ext}}$-compatible,
$\left.  \tilde{\sigma}_{A}\right\vert _{M_{\mathrm{ext}}\cup M_{2i}}$ is well
defined and normal for every $i.$ As for the edges connecting $M_{2i-2}$ and
$M_{2i},$ notice that we assumed $\tilde{\sigma}_{2i-2}$ is right-side
monochromatic. Since we also assumed that $\tilde{\sigma}_{2i}$ is left-side
compatible, by definition this implies that $\tilde{\sigma}_{2i}$ is
$\tilde{\sigma}_{2i-2}$-compatible, so $\left.  \tilde{\sigma}_{A}\right\vert
_{M_{2i-2}\cup M_{2i}}$ is also well defined and normal. Further,
$\tilde{\sigma}_{A}$ is proper because $\tilde{\sigma}_{\mathrm{ext}}$ and
$\tilde{\sigma}_{2i}$ are proper, and it uses the same colors as
$\tilde{\sigma}_{\mathrm{ext}}$ and $\tilde{\sigma}_{2i}.$ We conclude that
$\tilde{\sigma}_{A}$ is indeed a normal $5$-coloring of $\tilde{G}_{A}.$

It remains to prove that any other graph $\tilde{G}$ from $\mathcal{G}%
_{C}(\mathcal{A},\mathcal{B}),$ beside $\tilde{G}_{A},$ also has a normal
$5$-coloring. Recall that in $\tilde{G}_{A}$ it holds that $\mathcal{A}_{i}=A$
for every vertex $u_{i}$ of $C,$ and in $\tilde{G}$ it may hold $\mathcal{A}%
_{i}=A^{\prime}$ for some vertices $u_{i}.$ We will construct a normal
$5$-coloring of $\tilde{G}$ by slightly modifying the normal coloring
$\tilde{\sigma}_{A}$.

\bigskip\noindent\textbf{Claim C.} \emph{There exists a normal }%
$5$\emph{-coloring }$\tilde{\sigma}$\emph{ of }$\tilde{G}.$

\medskip\noindent Assume vertices $u_{i_{1}},\ldots,u_{i_{k}}$ from $C$ are
superposed by $A^{\prime}$ in $\tilde{G},$ and all other vertices of $C$ are
superposed by $A.$ We may assume $i_{1}<\cdots<i_{k}.$ Vertices which
subdivide edges $d_{i_{j}-1}c_{i_{j}}$ and $h_{i_{j}-1}g_{i_{j}}$ are denoted
by $u_{i_{j}}^{\prime}$ and $u_{i_{j}}^{\prime\prime}$ as in Figure
\ref{Fig_Kochol}. Denote by $\tilde{G}_{j}$ a graph from $\mathcal{G}%
_{C}(\mathcal{A},\mathcal{B})$ in which only vertices $u_{i_{1}}%
,\ldots,u_{i_{j}}$ are superposed by $A^{\prime},$ and all other by $A,$ so
obviously $\tilde{G}=\tilde{G}_{k}.$ We may also denote $\tilde{G}_{0}%
=\tilde{G}_{A}$ and $\tilde{\sigma}_{0}=\tilde{\sigma}_{A}.$ We will construct
a normal $5$-coloring $\tilde{\sigma}_{k}$ of $\tilde{G}_{k}$ inductively,
i.e. for every $j=1,\ldots,k$ we will construct $\tilde{\sigma}_{j}$ of
$\tilde{G}_{j}$ from $\tilde{\sigma}_{j-1}$ of $\tilde{G}_{j-1}.$ Without loss
of generality we may assume that for $j=1$ the index $i_{j}=i_{1}$ is even.

\bigskip\textbf{Case C.1: }$j=1.$ Since $i_{1}$ is even (here consider Figure
\ref{Fig_modification}.a) for illustration), it holds that $\left.
\tilde{\sigma}_{0}\right\vert _{\mathcal{B}_{i_{1}}}$ is equal to $\kappa$ up
to color permutation, so it uses only $3$ colors $t_{1},$ $t_{2}$ and $t_{3}.$
We may assume that colors $t_{i}$ of $\tilde{\sigma}_{0}$ are denoted so that
$\sigma^{t}(u_{i_{1}})=(t_{1},t_{2},t_{3}).$ For example, in Figure
\ref{Fig_modification}.a), we have $\sigma^{t}(u_{2i})=(4,5,1)$. The
edge-coloring $\tilde{\sigma}_{1}^{\prime}$ of $\mathcal{B}_{i_{1}}$ is
defined by $\tilde{\sigma}_{1}^{\prime}=I(\left.  \tilde{\sigma}%
_{0}\right\vert _{\mathcal{B}_{i_{1}}}).$ Since $\left.  \tilde{\sigma}%
_{0}\right\vert _{\mathcal{B}_{i_{1}}}$ is normal and uses only colors
$t_{1},t_{2},t_{3},$ the same holds for $\tilde{\sigma}_{1}^{\prime}.$ Also,
semiedges of $\mathcal{B}_{i_{1}}$ incident to vertices $c_{i_{j}}$ and
$g_{i_{j}}$ are in $\tilde{\sigma}_{1}^{\prime}$ colored\ by $t_{3}$ and
$t_{2},$ respectively. We now define $\tilde{\sigma}_{1}$ of $\tilde{G}_{1}$
by%
\[
\tilde{\sigma}_{1}(x)=\left\{
\begin{array}
[c]{ll}%
t_{3} & \text{if }x=u_{i_{1}}^{\prime}u_{i_{1}}^{\prime\prime},\\
\tilde{\sigma}_{1}^{\prime}(x) & \text{if }x\text{ belongs to }\mathcal{B}%
_{i_{1}},\\
\tilde{\sigma}_{0}(x) & \text{otherwise.}%
\end{array}
\right.
\]

Let us first establish that $\tilde{\sigma}_{1}$ is well defined, i.e. that
$\tilde{\sigma}_{1}^{\prime}$ of $\mathcal{B}_{i_{1}}$ is compatible with
$\tilde{\sigma}_{0}$ restricted to the rest of $\tilde{G}.$ Recall that
$\tilde{\sigma}_{0}$ is right-side monochromatic on the multipole $M_{i_{1}}$
induced by $V(\mathcal{B}_{i_{1}-1})\cup V(\mathcal{A}_{i_{1}})\cup
V(\mathcal{B}_{i_{1}}).$ Consequently, the extended coloring of semiedges
incident to $d_{i_{1}}$ and $h_{i_{1}}$ is the same in $\tilde{\sigma}%
_{1}^{\prime}$ and $\tilde{\sigma}_{0},$ which assures that $\tilde{\sigma
}_{1}^{\prime}$ is compatible with $\left.  \tilde{\sigma}_{0}\right\vert
_{\mathcal{B}_{i_{1}+1}}.$ Next, we need to establish that $\tilde{\sigma}%
_{1}$ is proper. For all vertices distinct from $u_{i_{1}}^{\prime}$ and
$u_{i_{1}}^{\prime\prime},$ the properness of $\tilde{\sigma}_{1}$ is the
consequence of $\tilde{\sigma}_{0}$ and $\tilde{\sigma}_{1}^{\prime}$ being
proper. For $u_{i_{1}}^{\prime}$ and $u_{i_{1}}^{\prime\prime},$ notice that
$\tilde{\sigma}_{0}(d_{i_{1}-1}c_{i_{1}})=t_{3}$ and $\tilde{\sigma}%
_{0}(h_{i_{1}-1}g_{i_{1}})=t_{2},$ so the properness of $\tilde{\sigma}_{1}$
at vertices $u_{i_{1}}^{\prime}$ and $u_{i_{1}}^{\prime\prime}$ follows from
$t_{2}\not =t_{3}$, the properties of the isomorphism $I$ and the fact that
$\tilde{\sigma}_{1}(u_{i_{1}}^{\prime}u_{i_{1}}^{\prime\prime})=t_{3}.$

Further, notice that $\tilde{\sigma}_{1}$ is obviously a $5$-coloring since it
uses the same colors as $\tilde{\sigma}_{0},$ so it remains to establish that
$\tilde{\sigma}_{1}$ is normal. If an edge $e$ of $\tilde{G}$ is not incident
to $u_{i_{1}}^{\prime}$ nor $u_{i_{1}}^{\prime\prime},$ then $e$ is normal by
$\tilde{\sigma}_{1}$ since it is normal by $\tilde{\sigma}_{0}$ or
$\tilde{\sigma}_{1}^{\prime}.$ Since $\left.  \tilde{\sigma}_{1}\right\vert
_{\mathcal{B}_{i_{1}}}$ uses only colors $t_{1}$ , $t_{2}$ and $t_{3},$ then
$\tilde{\sigma}_{1}(d_{i_{1}-1}u_{i_{1}}^{\prime})=t_{2}$ and $\tilde{\sigma
}_{1}(h_{i_{1}-1}u_{i_{1}}^{\prime\prime})=t_{3}$ implies that edges
$u_{i_{1}}^{\prime}u_{i_{1}}^{\prime\prime},$ $u_{i_{1}}^{\prime}c_{i_{1}}$
and $u_{i_{1}}^{\prime}g_{i_{1}}$ are poor by $\tilde{\sigma}_{1}.$ As for
edges $u_{i_{1}}^{\prime}d_{i_{1}-1}$ and $u_{i_{1}}^{\prime}h_{i_{1}-1},$
their color and the colors incident to their end-vertices is the same by
$\tilde{\sigma}_{0}$ and $\tilde{\sigma}_{1},$ so they are normal by
$\tilde{\sigma}_{1}$ since they are normal by $\tilde{\sigma}_{0}.$

\bigskip\textbf{Case C.2:} $j>1.\ $If $i_{j}$ is even, then $\tilde{\sigma
}_{j}$\ is constructed from $\tilde{\sigma}_{j-1}$ in the same way as in Case
C.1. Hence, we may assume $i_{j}$ is odd (here consider Figure
\ref{Fig_modification}.b) for illustration). In this case it holds that
$\left.  \tilde{\sigma}_{j-1}\right\vert _{\mathcal{B}_{i_{j}-1}}$ is equal to
$\kappa$ up to color permutation. Thus, $\left.  \tilde{\sigma}_{j-1}%
\right\vert _{\mathcal{B}_{i_{j}-1}}$ uses only three colors $t_{1},t_{2}$ and
$t_{3},$ where we may assume that colors are denoted so that $\sigma
^{t}(u_{i_{j}-1})=(t_{1},t_{2},t_{3}),$ so $Q$ is a Kempe $(t_{2},t_{1}%
)$-chain in $\mathcal{B}_{i_{j}-1}$ with respect to $\tilde{\sigma}_{0}$. For
example, in Figure \ref{Fig_modification}.b) we have $\sigma^{t}%
(u_{2i-1})=(1,3,2),$ and $Q$ is a $(3,1)$-Kempe chain in $\mathcal{B}%
_{i_{j}-1}$. Notice that $I(Q)$ is Kempe $(t_{2},t_{1})$-chain with respect to
$I(\tilde{\sigma}_{j-1})$, too. We define $\tilde{\sigma}_{j}^{\prime}$ of
$\mathcal{B}_{i_{j}-1}$ as a coloring obtained from $\left.  \tilde{\sigma
}_{j-1}\right\vert _{\mathcal{B}_{i_{j}-1}}$ by swapping colors $t_{1}$ and
$t_{2}$ along $Q.$ Since $\left.  \tilde{\sigma}_{j-1}\right\vert
_{\mathcal{B}_{i_{j}-1}}$ uses only $3$ colors $t_{1},$ $t_{2}$ and $t_{3},$
it follows that $\tilde{\sigma}_{j}^{\prime}$ uses the same $3$ colors.
Observation \ref{Obs_Kempe} implies that $\tilde{\sigma}_{j}^{\prime}$ is
normal. Now we define an edge-coloring $\tilde{\sigma}_{{}}$ of $\tilde{G}%
_{j}$ by
\[
\tilde{\sigma}_{j}(x)=\left\{
\begin{array}
[c]{ll}%
t_{3} & \text{if }x=u_{i_{1}}^{\prime}u_{i_{1}}^{\prime\prime},\\
\tilde{\sigma}_{j}^{\prime}(x) & \text{if }x\text{ belongs to }\mathcal{B}%
_{i_{j}-1},\\
\tilde{\sigma}_{j-1}(x) & \text{otherwise.}%
\end{array}
\right.
\]
Obviously, $\tilde{\sigma}_{j}$ is well defined $5$-coloring of $\tilde{G}%
_{j}.$ It remains to establish that $\tilde{\sigma}_{j}$ is proper and normal.
To establish that $\tilde{\sigma}_{j}$ is proper, it is sufficient to consider
vertices $u_{i_{j}}^{\prime}$ and $u_{i_{j}}^{\prime\prime},$ since for all
other vertices the properness follows from $\tilde{\sigma}_{j}^{\prime}$ and
$\tilde{\sigma}_{j-1}$ being proper. By swapping colors $t_{1}$ and $t_{2}$
along $Q$ in $\mathcal{B}_{i_{1}-1}$ and defining $\tilde{\sigma}_{j}%
(u_{i_{j}}^{\prime}u_{i_{j}}^{\prime\prime})=t_{3},$ we obtained
$\tilde{\sigma}_{j}(u_{i_{j}}^{\prime})=\tilde{\sigma}_{j}(u_{i_{j}}%
^{\prime\prime})=\{t_{1},t_{2},t_{3}\},$ so $\tilde{\sigma}_{j}$ is proper.

The same fact that $\tilde{\sigma}_{j}(u_{i_{j}}^{\prime})=\tilde{\sigma}%
_{j}(u_{i_{j}}^{\prime\prime})=\{t_{1},t_{2},t_{3}\}$ together with $\left.
\tilde{\sigma}_{j}\right\vert _{\mathcal{B}_{i_{j}-1}}$ being a normal
coloring which uses only colors $t_{1},$ $t_{2}$ and $t_{3},$ implies that
edges $u_{i_{j}}^{\prime}u_{i_{j}}^{\prime\prime},$ $u_{i_{j}}^{\prime
}d_{i_{j}-1}$ and $u_{i_{j}}^{\prime\prime}h_{i_{j}}$ are poor by
$\tilde{\sigma}_{j}.$ As for edges $u_{i_{j}}^{\prime}c_{i_{j}}$ and
$u_{i_{j}}^{\prime\prime}g_{i_{j}},$ notice that their color and the colors
incident to their end-vertices by $\tilde{\sigma}_{j}$ in $\tilde{G}_{j}$ is
the same as the color of edges $d_{i_{j}-1}c_{i_{j}}$ and $h_{i_{j}-1}%
g_{i_{j}}$, respectively, by $\tilde{\sigma}_{j-1}$ in $\tilde{G}_{j-1}.$
Thus, $u_{i_{j}}^{\prime}c_{i_{j}}$ and $u_{i_{j}}^{\prime\prime}g_{i_{j}}$
are normal by $\tilde{\sigma}_{j}$ in $\tilde{G}_{j}$ since $\tilde{\sigma
}_{j-1}$ is normal. Thus, the case when $j>1$ is odd is resolved, which
concludes the proof of Claim C.\medskip

\bigskip\noindent\textbf{Claim D.} \emph{The coloring }$\tilde{\sigma}$
\emph{of }$\tilde{G}$ \emph{contains at least }$18$\emph{ poor edges.}

\medskip\noindent Recall that the coloring $\kappa$ of the superedge $B$ is
defined as $\kappa=\left.  \tilde{\sigma}_{2i}\right\vert _{\mathcal{B}_{2i}}$
for $\tilde{\sigma}_{2i}$ from Figure \ref{Fig_tenDoubleStars}.a). Notice that
$B$ has $9$ edges, every of those $9$ edges is poor by $\kappa,$ and it will
remain poor if color permutation is applied to $\kappa.$ Our construction of
the coloring $\tilde{\sigma}$ of $\tilde{G}$ implies that $\tilde{\sigma
}(\mathcal{B}_{2i})$ is equal to $\kappa$ or to $I(\kappa)$ up to color
permutation, for every $i=0,\ldots,(g-2)/2.$ Since $G$ is simple, the shortest
even cycle $C$ of $G$ contains is of length $g\geq4,$ thus at least $2$
superedges $\mathcal{B}_{2i}$ are colored by $\kappa$, which implies that the
constructed coloring $\tilde{\sigma}$ of $\tilde{G}$ has at least $18$ poor
edges.\medskip

Hence, we have established that every superposition $\tilde{G}\in
\mathcal{G}_{C}(\mathcal{A},\mathcal{B})$ has a normal $5$-coloring. Moreover,
the constructed normal coloring introduces at least $18$ poor edges, and the
claim of the theorem is established.
\end{proof}

\begin{figure}[h]
\begin{center}
\includegraphics[scale=0.6]{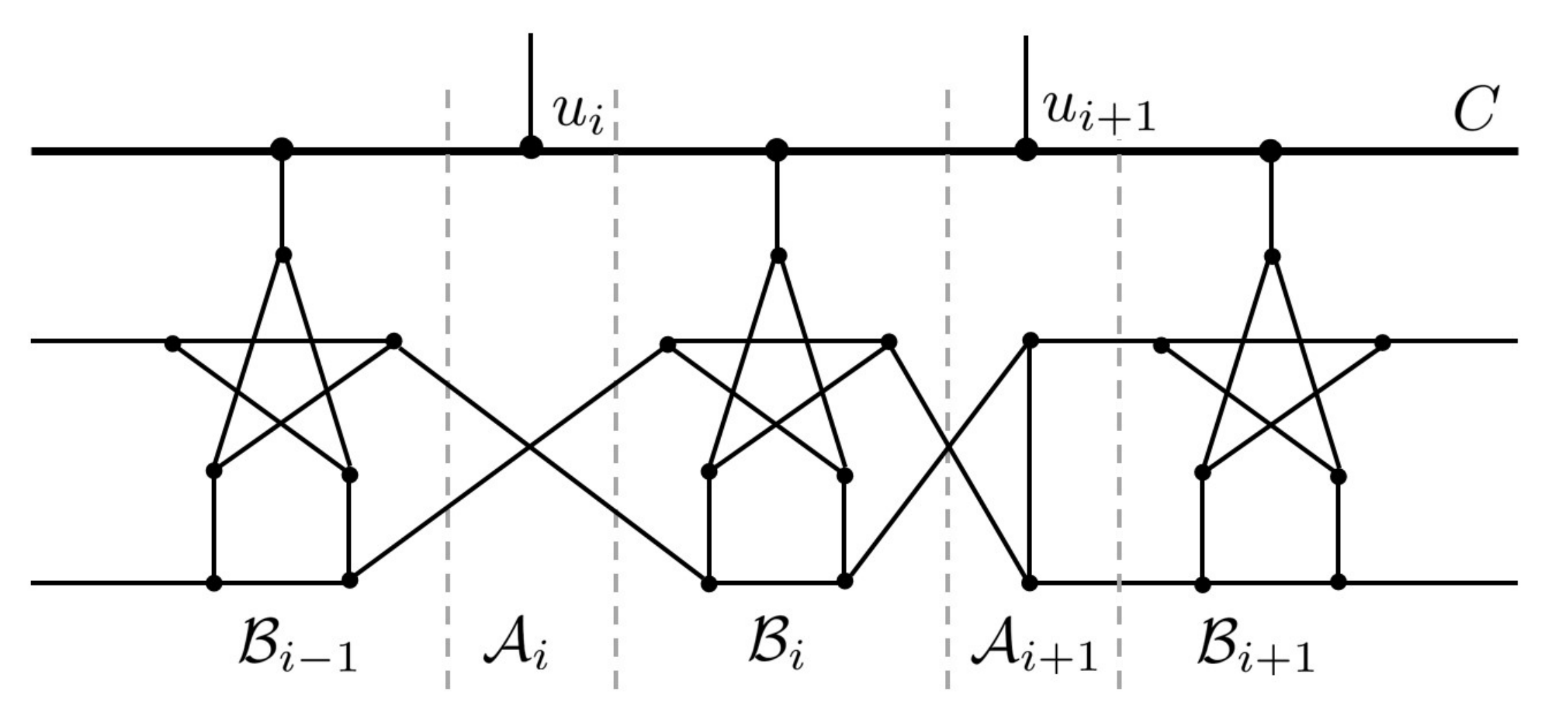}
\end{center}
\caption{Figure shows a twist in supervertices $\mathcal{A}_{i}$ of type $A$
and $\mathcal{A}_{i+1}$ of type $A^{\prime}.$}%
\label{Fig_twist}%
\end{figure}

In Theorem \ref{Tm_main} we considered only snarks from $\mathcal{G}%
_{C}(\mathcal{A},\mathcal{B})$ in which semiedges of $\mathcal{B}_{i-1}$ and
$\mathcal{B}_{i}$ are identified in one particular way, the one from Figure
\ref{Fig_Kochol}. The normal colorings constructed in the proof of Theorem
\ref{Tm_main} can easily be modified to be a normal coloring of a
superposition snark which has the so called twist in some of the
supervertices. First, we say that a supervertex $\mathcal{A}_{i}$ has a
\emph{twist}, if semiedges of $\mathcal{B}_{i-1}$ and $\mathcal{B}_{i}$ are
identified as in Figure \ref{Fig_twist}. Let $\mathcal{G}_{C}^{\mathrm{tw}%
}(\mathcal{A},\mathcal{B})$ denote the set of all superpositions in which some
of the supervertices $\mathcal{A}_{i}$ may have a twist.

\begin{corollary}
Let $G$ be a snark which admits a normal $5$-coloring and let $C$ be an even
cycle in $G$. Then every $\tilde{G}^{\mathrm{tw}}\in\mathcal{G}_{C}%
^{\mathrm{tw}}(\mathcal{A},\mathcal{B})$ admits a normal $5$-coloring which
has at least $18$ poor edges.
\end{corollary}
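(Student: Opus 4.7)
The plan is to derive a normal $5$-coloring of an arbitrary $\tilde{G}^{\mathrm{tw}}\in\mathcal{G}_C^{\mathrm{tw}}(\mathcal{A},\mathcal{B})$ from the normal $5$-coloring of the corresponding non-twisted superposition by local modification at each twisted supervertex. Concretely, let $\tilde{G}\in\mathcal{G}_C(\mathcal{A},\mathcal{B})$ be the superposition that uses the same $\mathcal{A}$ and $\mathcal{B}$ as $\tilde{G}^{\mathrm{tw}}$ but with the default identification of semiedges at every supervertex (as in Figure~\ref{Fig_Kochol}), and let $\tilde{\sigma}$ be the normal $5$-coloring of $\tilde{G}$ with at least $18$ poor edges supplied by Theorem~\ref{Tm_main}. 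I would process the twisted supervertices one at a time, modifying $\tilde{\sigma}$ only within the two superedges adjacent to each twist.

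The twists split into two kinds according to the parity organising the proof of Theorem~\ref{Tm_main}. A twist at an odd-indexed supervertex $\mathcal{A}_{2k+1}$ lies between the multipoles $M_{2k}$ and $M_{2k+2}$, and its effect is to permute how the three semiedges on the right side of $M_{2k}$ are identified with the three semiedges on the left $3$-connector of $\mathcal{A}_{2k+1}$. By Claim~B of Theorem~\ref{Tm_main} the restriction $\tilde{\sigma}_{2k}$ is right-side monochromatic, so those three semiedges all carry the same complete color. Consequently every such permutation preserves both properness and normality, and no recoloring is required for odd-indexed twists.

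The more delicate case is a twist at an even-indexed supervertex $\mathcal{A}_{2k}$, which is internal to $M_{2k}$ and therefore changes the internal structure of that multipole. Here I would adjust the restriction of $\tilde{\sigma}$ to $\mathcal{B}_{2k-1}$ (and, if required, to $\mathcal{B}_{2k}$) so that the twisted identification around $\mathcal{A}_{2k}$ produces a proper and normal coloring. The tools are exactly those already used in Theorem~\ref{Tm_main}: Kempe swaps along the paths $P$ and $Q$ of $B$, as justified by Observation~\ref{Obs_Kempe}, together with the involution $I$ of $B$ from the observation on $I(\sigma)$. Since $\tilde{\sigma}$ uses only three colors on each of $\mathcal{B}_{2k-1}$ and $\mathcal{B}_{2k}$, both $P$ and $Q$ are available Kempe chains, which together with $I$ give exactly the freedom to realise the permutation of semiedges induced at $\mathcal{A}_{2k}$ by the twist while leaving the colors on both the left and right sides of $M_{2k}$ unchanged. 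In particular, right-side monochromaticity of $\tilde{\sigma}_{2k}$ and compatibility with $M_{2k\pm 2}$ survive, so the modifications at distinct twisted supervertices do not interfere.

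The main obstacle, which I expect to require a short but finite case analysis, is to check that $I$ combined with Kempe swaps along $P$ and $Q$ realises every twisted identification permitted in Figure~\ref{Fig_twist}, for both types $A$ and $A^{\prime}$ of $\mathcal{A}_{2k}$; this is a twisted analogue of the case analysis behind Figure~\ref{Fig_tenDoubleStars}. For the poor-edge count, Kempe swaps preserve the set of poor edges by Observation~\ref{Obs_Kempe}, the involution $I$ maps a normal coloring of $B$ to a normal coloring of $B$ with the same set of poor edges, and the at least $18$ poor edges furnished by Theorem~\ref{Tm_main} lie inside the superedges $\mathcal{B}_{2k}$ coloured by color permutations of $\kappa$. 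Hence the modified coloring $\tilde{\sigma}^{\mathrm{tw}}$ is a normal $5$-coloring of $\tilde{G}^{\mathrm{tw}}$ with at least $18$ poor edges, completing the plan.
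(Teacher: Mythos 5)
Your treatment of a twist at an odd-indexed supervertex is sound and coincides with the argument the paper itself uses: right-side monochromaticity of $\tilde{\sigma}_{2k}$ means the three semiedges leaving $M_{2k}$ on the right all carry the same complete color, so any re-identification of them through $\mathcal{A}_{2k+1}$ is harmless for properness, normality and the poor-edge count. The problem is your even-indexed case, where you do not actually give an argument: you assert that the involution $I$ together with Kempe swaps along $P$ and $Q$ should realise every twisted identification at $\mathcal{A}_{2k}$ while fixing the colors on both boundaries of $M_{2k}$, and you explicitly defer the verification to a ``short but finite case analysis'' that you never carry out. That verification is precisely the nontrivial content of the claim --- it would amount to redoing the ten-case analysis of Claim~B (Figure~\ref{Fig_tenDoubleStars}) for the twisted connection pattern --- so as written the proof is incomplete.

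The paper avoids the even-indexed case entirely by a global reduction that your local, twist-by-twist strategy misses. Applying the isomorphism $I$ to every superedge $\mathcal{B}_l$ in the run between two consecutive twisted supervertices cancels that pair of twists, so one may assume $\tilde{G}^{\mathrm{tw}}$ has at most one twist; and since the relevant freedom in labelling $V(C)$ allows the parity of the indices to be chosen, the single surviving twist may be placed at an odd index, where the monochromaticity argument applies and Theorem~\ref{Tm_main} does the rest. If you want to salvage your approach, either carry out the deferred case analysis for even-indexed twists in full, or, better, replace that step with this pairing-and-relabelling reduction.
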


\begin{proof}
Notice that the isomorphism $I$ of the superedge $B$ can be used to reduce the
number of twists in $\tilde{G}^{\mathrm{tw}}.$ Namely, if $\tilde
{G}^{\mathrm{tw}}$ has even number of twists, i.e. $\mathcal{A}_{i_{j}}$ has a
twist for $j=1,\ldots,2k,$ then applying repeatedly isomorphism $I$ to
superedges $\mathcal{B}_{l}$ of $\tilde{G}^{\mathrm{tw}},$ for every
$l=i_{2j},\ldots,i_{2j+1}$ and $j=1,\ldots,k,$ one obtains that $\tilde
{G}^{\mathrm{tw}}$ is isomorphic to a superposition $\tilde{G}\in
\mathcal{G}_{C}(\mathcal{A},\mathcal{B})$ without twists, and then the claim
holds by Theorem \ref{Tm_main}. On the other hand, if $\tilde{G}^{\mathrm{tw}%
}$ has odd number of twists then the number of twists in a similar manner can
be reduced to only one. Hence, we can assume that $\tilde{G}^{\mathrm{tw}}$
has twist in only one supervertex $\mathcal{A}_{i}$, where without loss of
generality we may assume that $i$ is odd. For $\tilde{G}^{\mathrm{tw}},$ let
$\tilde{G}\in\mathcal{G}_{C}(\mathcal{A},\mathcal{B})$ be a corresponding
snark without the twist, and let $\tilde{\sigma}$ be the normal coloring of
$\tilde{G}$ as constructed in the proof of Theorem \ref{Tm_main}. Since
$\left.  \tilde{\sigma}\right\vert _{\mathcal{B}_{i-1}}$ is right-side
monochromatic, then the coloring obtained by rewiring edges of $\tilde{G}$ to
obtain the twist remains well defined, normal and all poor edges are preserved.
\end{proof}

\begin{figure}[h]
\begin{center}
\includegraphics[scale=0.7]{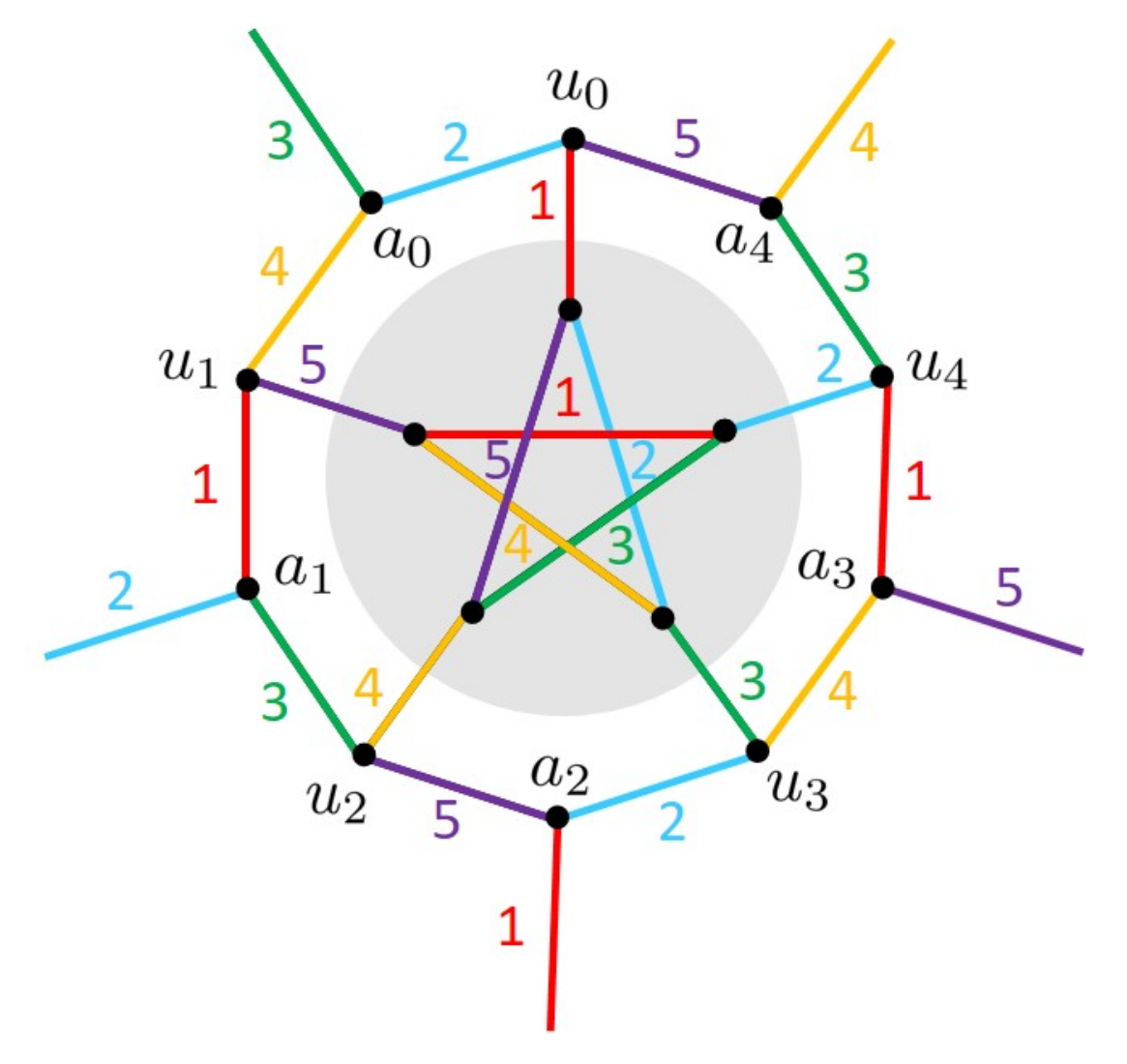}
\end{center}
\caption{Within the shadowed circle the restriction of the only normal
$5$-edge-coloring $\sigma$ of Petersen graph $G$ to $M_{\mathrm{int}}$ is
shown, and the whole figure shows the only normal $5$-edge-coloring
$\tilde{\sigma}_{\mathrm{ext}}$ of $M_{\mathrm{ext}}$ such that $\left.
\tilde{\sigma}_{\mathrm{ext}}\right\vert _{M_{\mathrm{int}}}=\left.
\sigma\right\vert _{M_{\mathrm{int}}}.$ Notice that the outer $10$-cycle in
$M_{\mathrm{ext}}$ arises from subdivisions of $5$-cycle in Petersen graph
when superposing edges of it by $B$.}%
\label{Fig_odd}%
\end{figure}

\paragraph{Case of odd cycles.}

The approach of Theorem \ref{Tm_main} cannot be extended to odd cycles in an
obvious way. To see this, we will assume the same notation as in the proof of
Theorem \ref{Tm_main}, i.e. by $M_{\mathrm{int}}$ we denote a submultipole of
$\tilde{G}$ induced by $V(G)\backslash V(C),$\ and by $M_{\mathrm{ext}}$ we
denote a submultipole of $\tilde{G}$ induced by $V(G)\cup\{a_{i}%
:i=0,\ldots,g-1\}.$ For a normal $5$-coloring $\sigma$ of a snark $G$ and an
even cycle $C$ in $G,$ let $\tilde{\sigma}$ be the normal $5$-coloring of
$\tilde{G}\in\mathcal{G}_{C}(\mathcal{A},\mathcal{B})$ as constructed in the
proof of Theorem \ref{Tm_main}. Notice that$\ \left.  \sigma\right\vert
_{M_{\mathrm{int}}}=\left.  \tilde{\sigma}\right\vert _{M_{\mathrm{int}}},$
i.e. we extended the coloring $\sigma$ of $G$ to the coloring $\tilde{\sigma}$
of $\tilde{G}$ by preserving colors of edges of $G$ outside of $C.$ In the
case of an odd cycle $C$, this is not always possible, as it is shown by the
following proposition which we verified in silico.

\begin{proposition}
\label{Prop_odd}Let $G$ be the Petersen graph, $\sigma$ the only normal
$5$-coloring of $G$ up to color permutation and $C$ a cycle of length $5$ in
$G.$ Let $\tilde{G}\in\mathcal{G}_{C}(\mathcal{A},\mathcal{B})$ be the snark
in which every vertex of $C$ is superposed by $A.$ Then there exists precisely
one normal coloring $\tilde{\sigma}_{\mathrm{ext}}$ of the submultipole
$M_{\mathrm{ext}}$ of $\tilde{G}$ (see Figure \ref{Fig_odd}) such that
$\left.  \tilde{\sigma}_{\mathrm{ext}}\right\vert _{M_{\mathrm{int}}}=\left.
\sigma\right\vert _{M_{\mathrm{int}}}.$ Also, for the submultipole $M_{123}$
of $\tilde{G}$ induced by $V(\mathcal{B}_{1})\cup V(\mathcal{A}_{2})\cup
V(\mathcal{B}_{2})\cup V(\mathcal{A}_{3})\cup V(\mathcal{B}_{3}),$ there
exists no normal $5$-coloring $\tilde{\sigma}_{1}$ of $M_{123}$ which is
compatible with $\tilde{\sigma}_{\mathrm{ext}}$.
\end{proposition}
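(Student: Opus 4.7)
The proposition is a finite verification concerning a single graph ($P_{10}$) and a specific submultipole of its superposition, so my plan is to organise the check into three stages: pin down $\left.\sigma\right|_{M_{\mathrm{int}}}$, propagate the extension around the subdivided cycle to obtain uniqueness of $\tilde{\sigma}_{\mathrm{ext}}$, and then enumerate the admissible normal colorings of $M_{123}$ to see that none of them is compatible with $\tilde{\sigma}_{\mathrm{ext}}$.

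First I would use the well-known uniqueness (up to permutation of colors) of the normal $5$-coloring $\sigma$ of $P_{10}$ in order to write down $\sigma$ explicitly on $M_{\mathrm{int}}$: the colors of the five surviving spokes $u_i v_i$ together with the inner cycle on $\{v_i\}$ are then completely determined. Passing to $M_{\mathrm{ext}}$, the only new vertices are the subdivision vertices $a_0,\ldots,a_4$, and at each $u_i$ the color $\sigma(u_i v_i)$ is already fixed, so the unordered pair $\{\tilde{\sigma}_{\mathrm{ext}}(a_{i-1}u_i),\tilde{\sigma}_{\mathrm{ext}}(u_i a_i)\}$ is forced to equal $\{\sigma(u_{i-1}u_i),\sigma(u_i u_{i+1})\}$. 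The only remaining freedom is the ordering within this pair at each $u_i$, plus the choice of the semiedge color $\tilde{\sigma}_{\mathrm{ext}}(a_i b_i)$. Walking around the cycle and imposing normality of each new cycle edge and of each spoke $u_i v_i$, I would show that these constraints propagate to a unique solution. Parity is essential here: for an even cycle the constraints close up into a rotation of colors (which is how Theorem \ref{Tm_main} gains its flexibility), whereas for the odd cycle of length $5$ the rotation cannot be completed consistently and $\tilde{\sigma}_{\mathrm{ext}}$ is forced into a single configuration, giving the first part of the statement.

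Once $\tilde{\sigma}_{\mathrm{ext}}$ is pinned down, compatibility prescribes the color on every semiedge of $M_{123}$ shared with $M_{\mathrm{ext}}$, namely the semiedges of $\mathcal{B}_1$ and $\mathcal{B}_3$ facing $\mathcal{A}_1$ and $\mathcal{A}_4$, the subdivision colors at $a_1,a_2,a_3$, and the semiedges leaving $\mathcal{A}_2$ and $\mathcal{A}_3$ toward $v_2$ and $v_3$. It remains to show that no normal $5$-coloring of the interior of $\mathcal{B}_1\cup\mathcal{A}_2\cup\mathcal{B}_2\cup\mathcal{A}_3\cup\mathcal{B}_3$ extends this boundary data. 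Each $\mathcal{B}_j$ is a copy of the superedge $B=(P_{10})_{u_1,u_2}$, whose normal $5$-colorings with prescribed colors on one $3$-connector form a finite list; the isomorphism $I$ of $B$ further halves the list up to symmetry. I would tabulate these candidates for $\mathcal{B}_1,\mathcal{B}_2,\mathcal{B}_3$ and then check, at the subdivision vertices inside $\mathcal{A}_2$ and $\mathcal{A}_3$, which combinations are simultaneously consistent. The main obstacle, and indeed the essential content of the proposition, is that this final enumeration has no short structural explanation for failure --- this is precisely the authors' ``in silico'' verification, and my proof would either walk through the contradictions case by case or reduce them via color-permutation and $I$-symmetry to a handful of representative cases, in each of which normality fails at some edge meeting $a_1,a_2$ or $a_3$.
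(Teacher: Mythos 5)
Your plan is essentially the paper's own approach --- in fact the paper gives no written argument for Proposition \ref{Prop_odd} at all, stating only that it was verified in silico, and your outline is the natural by-hand organisation of exactly that exhaustive finite check (pin down $\sigma$, propagate to $M_{\mathrm{ext}}$, enumerate colorings of the three copies of $B$ glued through $\mathcal{A}_2$ and $\mathcal{A}_3$). One concrete caveat on the uniqueness step: the unordered pair of colors at $u_i$ on the two subdivided cycle edges is \emph{not} immediately forced to be $\{\sigma(u_{i-1}u_i),\sigma(u_iu_{i+1})\}$, because normality of $u_iv_i$ in $M_{\mathrm{ext}}$ also admits the poor alternative in which that pair equals $\sigma(v_i)\setminus\{\sigma(u_iv_i)\}$ (the fact that $u_iv_i$ is rich in $\sigma$ on $P_{10}$ does not oblige it to stay rich in the extension); this branch enlarges the search space and must be enumerated and eliminated before uniqueness of $\tilde{\sigma}_{\mathrm{ext}}$ follows. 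With that branch included, your procedure is a valid (if unexecuted) verification, on the same footing as the authors' computation.
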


Assume now that $\tilde{\sigma}$ is a normal $5$-coloring of the graph
$\tilde{G}$ from the above proposition. If we define $\tilde{\sigma
}_{\mathrm{ext}}=\left.  \tilde{\sigma}\right\vert _{M_{\mathrm{ext}}}$ and
$\tilde{\sigma}_{1}=\left.  \tilde{\sigma}\right\vert _{M_{123}},$ then
$\tilde{\sigma}_{1}$ would be compatible with $\tilde{\sigma}_{\mathrm{ext}}.$
This further implies that in order to construct a normal $5$-coloring
$\tilde{\sigma}$ of $\tilde{G},$ it cannot hold $\left.  \tilde{\sigma
}\right\vert _{M_{\mathrm{int}}}=\left.  \sigma\right\vert _{M_{\mathrm{int}}%
},$ i.e. one cannot simply take a normal coloring $\sigma$ of the Petersen
graph $G$ and then extend it to the superposition $\tilde{G}$ by preserving
colors outside the cycle $C.$

\begin{figure}[ph]
\begin{center}%
\begin{tabular}
[c]{c}%
\begin{tabular}
[t]{ll}%
a) & \raisebox{-0.9\height}{\includegraphics[scale=0.7]{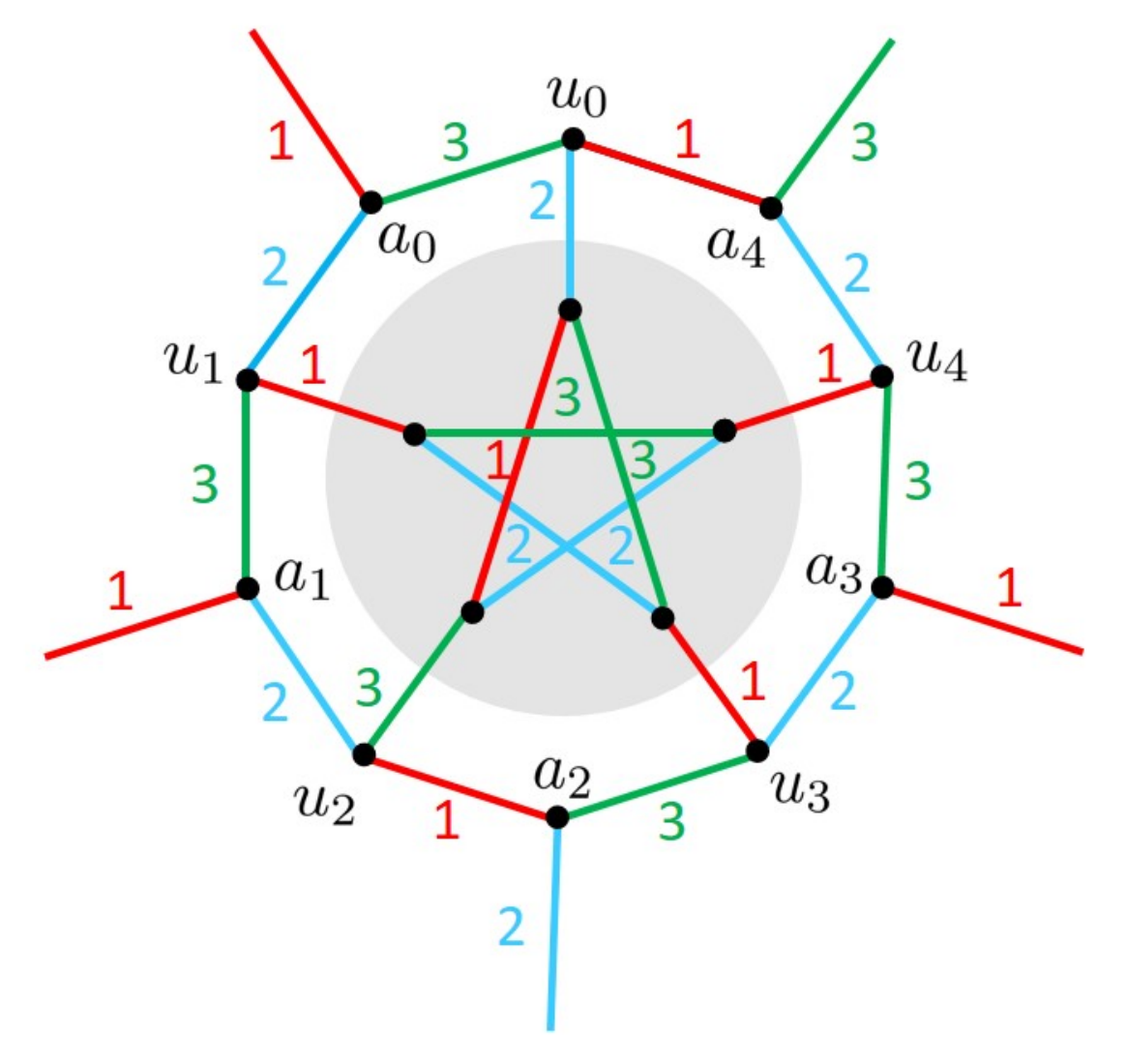}}
\end{tabular}
\\%
\begin{tabular}
[t]{llll}%
b) & \raisebox{-0.9\height}{\includegraphics[scale=0.6]{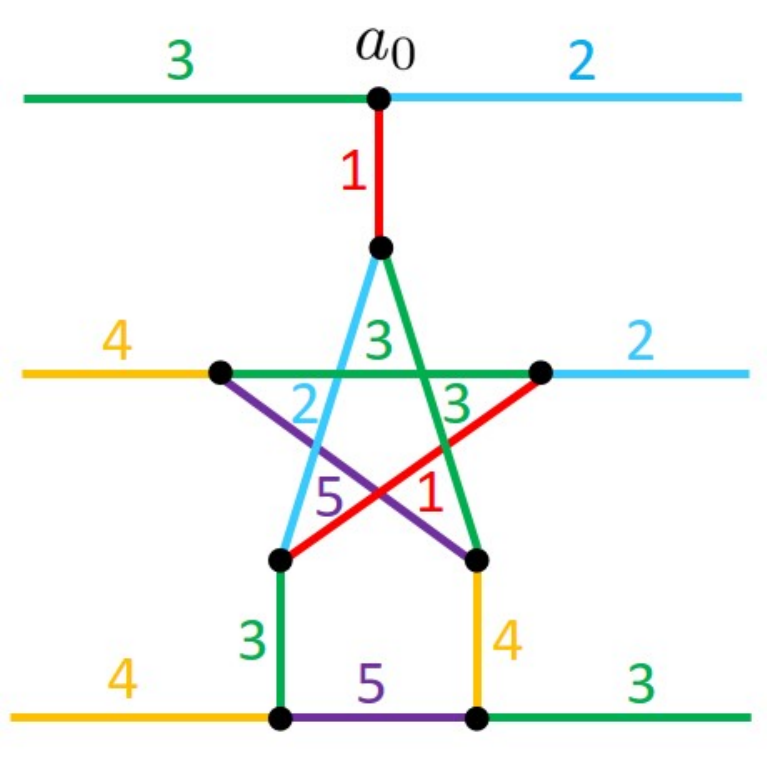}} & c) &
\raisebox{-0.9\height}{\includegraphics[scale=0.6]{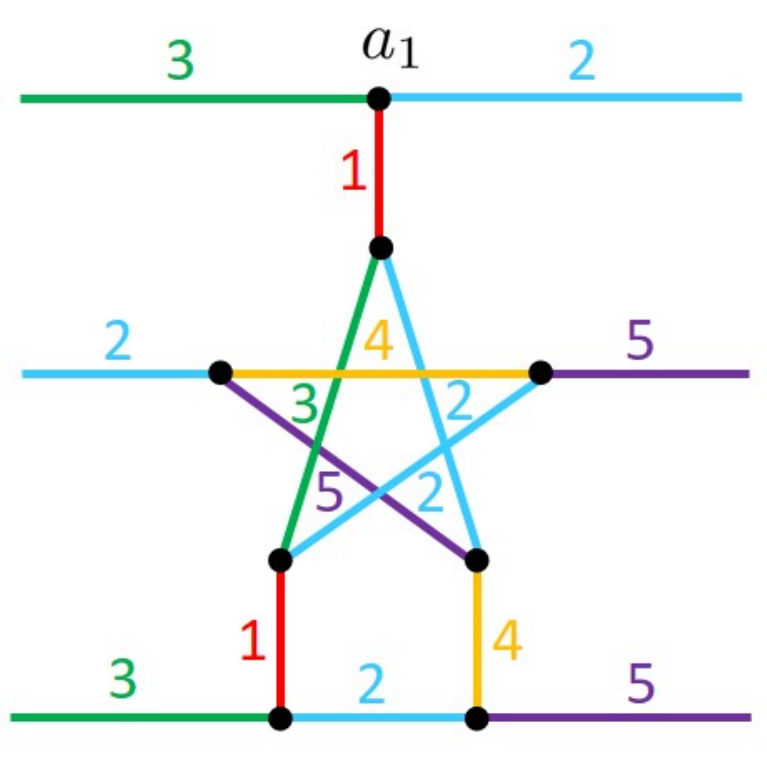}}
\end{tabular}
\\%
\begin{tabular}
[t]{llll}%
d) & \raisebox{-0.9\height}{\includegraphics[scale=0.6]{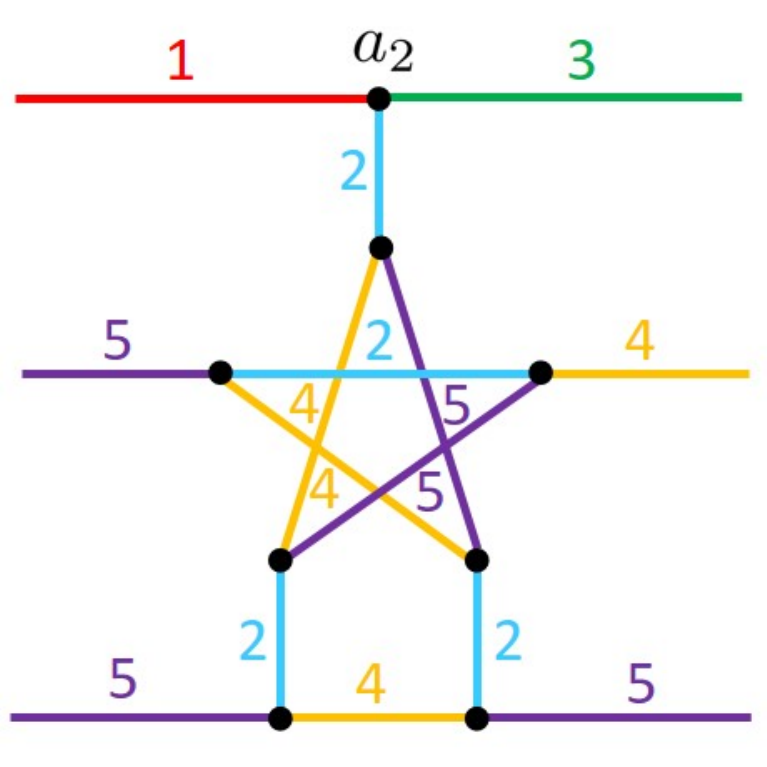}} & e) &
\raisebox{-0.9\height}{\includegraphics[scale=0.6]{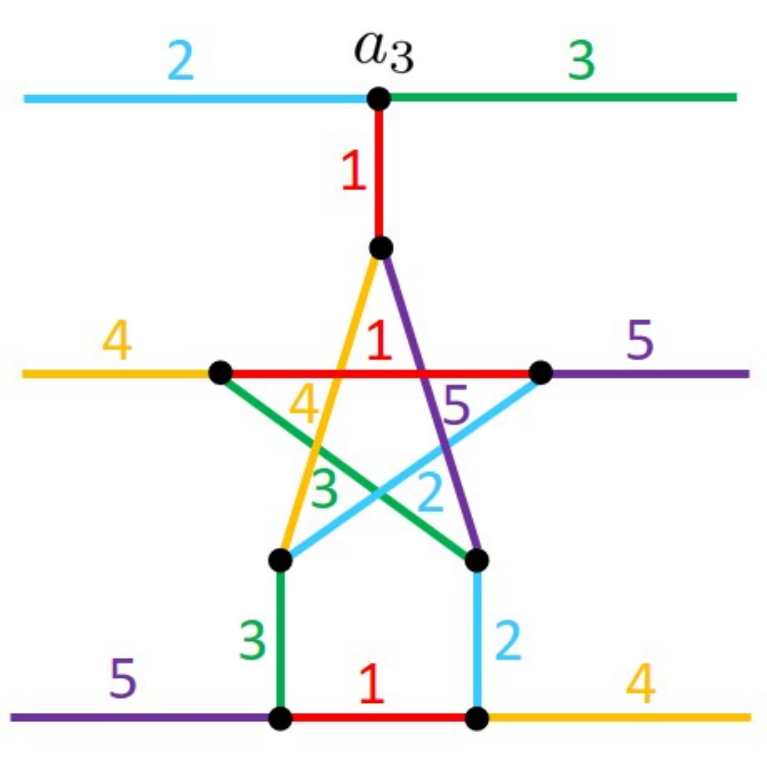}}
\end{tabular}
\\%
\begin{tabular}
[t]{ll}%
f) & \raisebox{-0.9\height}{\includegraphics[scale=0.6]{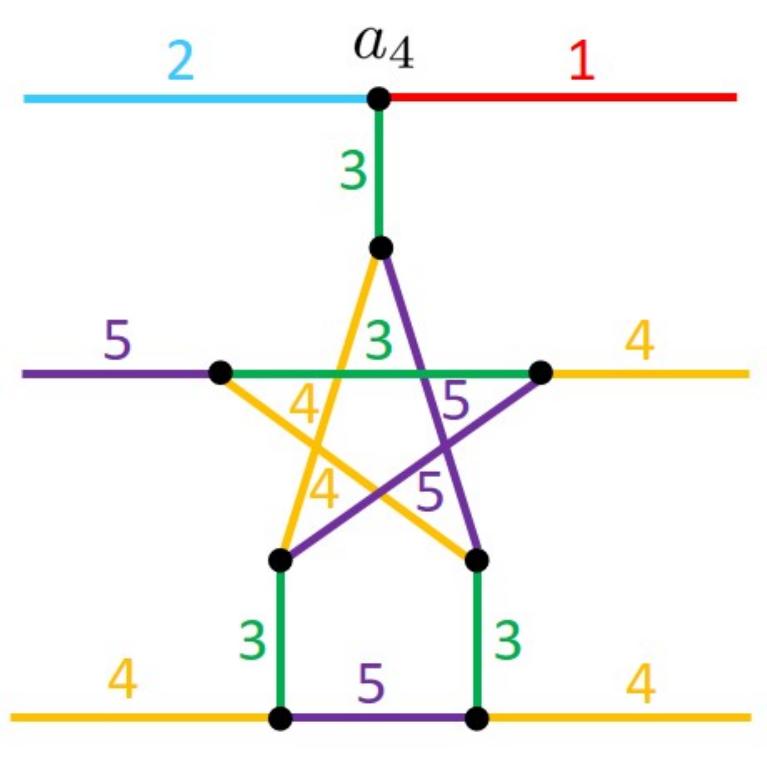}}
\end{tabular}
\end{tabular}
\end{center}
\caption{The coloring $\tilde{\sigma}$ of $\tilde{G}$ from Proposition
\ref{Prop_odd} shown as a restriction to: a) $M_{\mathrm{ext}},$ b)
$\mathcal{B}_{0},$ c) $\mathcal{B}_{1},$ d) $\mathcal{B}_{2},$ e)
$\mathcal{B}_{3},$ f) $\mathcal{B}_{4}.$}%
\label{Fig_oddSuperpositionColoring}%
\end{figure}

\begin{remark}
There exists at least one snark $G$ (the Petersen graph) and at least one odd
cycle $C$ in $G$ (the cycle of length $5$), such that it is not possible to
extend a normal $5$-coloring of $G$ to the snark $\tilde{G}$ obtained by
superpositioning the edges and vertices on $C$ without changing the colors of
edges outside $C.$
\end{remark}

When this condition $\left.  \tilde{\sigma}\right\vert _{M_{\mathrm{int}}%
}=\left.  \sigma\right\vert _{M_{\mathrm{int}}}$ which cannot hold for
$\tilde{G}$ from Proposition \ref{Prop_odd} is abandoned, then $\tilde{G}$
does have a normal $5$-coloring as it is established by Figure
\ref{Fig_oddSuperpositionColoring} in which $\tilde{\sigma}_{\mathrm{ext}%
}=\left.  \tilde{\sigma}\right\vert _{M_{\mathrm{ext}}}$ is shown, and then
for each $\mathcal{B}_{i}$ a coloring $\tilde{\sigma}_{i}=\left.
\tilde{\sigma}\right\vert _{\mathcal{B}_{i}}$ which is compatible with
$\tilde{\sigma}_{\mathrm{ext}}$ and $\tilde{\sigma}_{i-1}.$

\section{Concluding remarks and further work}

In this paper we considered a snark $\tilde{G}$ obtained from a snark $G$ by
superposing edges of a cycle $C$ in $G$ by a superedge $B$ obtained from
Petersen graph, and vertices of $C$ by one of two simple supervertices $A$ and
$A^{\prime}.$ Assuming that $G$ has a normal $5$-coloring $\sigma$, we
established that in the case of even $C$ the graph $\tilde{G}$ also has a
normal $5$-coloring $\tilde{\sigma}.$ Moreover, $\tilde{\sigma}$ can be
obtained by extending $\sigma$ to $\tilde{G}$ so that the colors of edges of
$G$ outside of the cycle $C$ are preserved. We also observed that in case of
an odd cycle $C$ this is not always possible, i.e. that there exists at least
one graph $G$ and at least one cycle $C$ in $G$ such that it is not possible
to extend a coloring $\sigma$ of $G$ to the superposition $\tilde{G}$ by
preserving colors outside of $C$ in $G.$ It must be noted that we mostly
considered only one particular way of identifying semiedges of superedges and
supervertices in a superposition, and our approach yielded the solution for
one more way of connecting superedges and supervertices as a simple corollary,
all other possible ways of semiedge identification we leave for further work.

\paragraph{Strong coloring.}

An edge-coloring of a cubic graph $G$ in which every edge is rich is called a
\emph{strong coloring}. The smallest number of colors required by a strong
coloring of a cubic graph $G$ is called the \emph{strong chromatic index} and
denoted by $\chi_{s}^{\prime}(G).$ Let $\mathrm{{NC}}(G)$ denote the set of
all normal $5$-colorings of $G$. Petersen Coloring Conjecture is equivalent to
the claim that $\mathrm{{NC}}(G)\not =\emptyset$ for every bridgeless cubic
graph $G.$

Assuming that the Petersen Coloring Conjecture holds i.e. that $\mathrm{{NC}%
}(G)$ is indeed non-empty for every bridgeless cubic graph, we define
$\mathrm{{poor}}(G)$ as the maximum number of poor edges among all colorings
from $\mathrm{{NC}}(G)$. Obviously, if $\mathrm{{poor}}(G)=\left\vert
E(G)\right\vert $ then $G$ is $3$-edge colorable, and if $\mathrm{{poor}%
}(G)=0$ then $\chi_{s}^{\prime}(G)=5.$ Finally, if $0<\mathrm{{poor}%
}(G)<\left\vert E(G)\right\vert \mathrm{,}$ then a relation between
$\chi^{\prime}(G)$ and $\chi_{s}^{\prime}(G)$ is less obvious, it might happen
that $G$ is $3$-edge colorable, with larger strong chromatic index.

The following relation of a strong chromatic index and a covering graph is
given by Lu\v{z}ar et al. \cite{LuzarJGT}. First, a \emph{covering graph}
$\tilde{G}$ of a graph $G$ is any graph for which a surjective graph
homomorphism $f:\tilde{G}\rightarrow G$ exists such that for every vertex
$\tilde{v}$ of $\tilde{G}$ the set of edges incident with $\tilde{v}$ is
bijectively mapped onto the set of edges incident with $f(\tilde{v}).$

\begin{theorem}
The strong chromatic index of a cubic graph $G$ equals $5$ if and only if $G$
is a covering graph of the Petersen graph.
\end{theorem}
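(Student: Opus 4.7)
The plan is to use the Kneser realization of the Petersen graph, which turns a strong $5$-edge-coloring into a covering map in a canonical way, and conversely. Identify $V(P_{10})$ with the set of $3$-subsets of $[5]=\{1,\ldots,5\}$, two such subsets being adjacent iff they share exactly one element; in this model $P_{10}$ carries a canonical strong $5$-edge-coloring $\sigma_0$ defined by letting $\sigma_0(XY)$ be the unique element of $X\cap Y$. One checks directly that $\sigma_0$ is proper (the three edges at $X=\{a,b,c\}$ lead to the three neighbors $\{a\}\cup([5]\setminus X)$, $\{b\}\cup([5]\setminus X)$, $\{c\}\cup([5]\setminus X)$ and thus receive the distinct colors $a,b,c$) and that every edge $XY$ is rich, since $|X\cup Y|=5$. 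In particular, $\sigma_0(X)=X$ for every vertex $X$.

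For the ``if'' direction, assume $f:G\to P_{10}$ is a covering map and set $\sigma(e)=\sigma_0(f(e))$ for every $e\in E(G)$. Since $f$ restricts at every vertex to a bijection of incident edges, $\sigma$ is a proper $5$-edge-coloring with $\sigma(v)=\sigma_0(f(v))=f(v)$, so for each edge $uv$ we have $\sigma(u)\cup\sigma(v)=f(u)\cup f(v)=[5]$, meaning that $uv$ is rich. Hence $\chi_s'(G)\leq 5$; the matching lower bound $\chi_s'(G)\geq 5$ holds for any cubic graph because the five edges incident to the endpoints of a rich edge must receive distinct colors.

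For the ``only if'' direction, let $\sigma$ be a strong $5$-edge-coloring of $G$. Each $\sigma(v)$ is a $3$-subset of $[5]$, and the richness identity $|\sigma(u)\cup\sigma(v)|=5$ forces $\sigma(u)\cap\sigma(v)=\{\sigma(uv)\}$ for every edge $uv$. Define $f:V(G)\to V(P_{10})$ by $f(v)=\sigma(v)$; then $f(u)$ and $f(v)$ share exactly one element, so $f(u)f(v)\in E(P_{10})$, and $f$ extends to a graph homomorphism by $f(uv)=f(u)f(v)$. To verify that $f$ is a covering, note that the three edges of $\partial_G(v)$ receive the three distinct colors of $\sigma(v)=f(v)$, which are exactly the labels that $\sigma_0$ assigns to the three edges of $\partial_{P_{10}}(f(v))$; hence $f$ restricts to a bijection $\partial_G(v)\to\partial_{P_{10}}(f(v))$. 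Finally, the image of $f$ is closed under taking neighbors by this local bijection property, so connectedness of $P_{10}$ forces $f$ to be surjective (component by component if $G$ is disconnected).

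The main delicate step is the converse direction, where the key computation is the identity $\sigma(u)\cap\sigma(v)=\{\sigma(uv)\}$ extracted from the richness condition; once it is in hand, the definition of $f$ becomes essentially forced, and the verification that $f$ is a genuine graph covering reduces to the observation that both $\partial_G(v)$ and $\partial_{P_{10}}(f(v))$ are three-element sets put in bijective correspondence via the common color palette $\sigma(v)$.
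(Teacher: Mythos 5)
Your proof is correct, but note that the paper does not actually prove this theorem: it is quoted as a known result of Lu\v{z}ar, M\'a\v{c}ajov\'a, \v{S}koviera and Sot\'ak \cite{LuzarJGT}, so there is no in-paper argument to compare against. Your route---realizing $V(P_{10})$ as the $3$-subsets of $[5]$ so that the canonical coloring $\sigma_0(XY)=X\cap Y$ is strong, pulling $\sigma_0$ back along a covering for the ``if'' direction, and reading the vertex map $v\mapsto\sigma(v)$ off a strong $5$-edge-coloring for the ``only if'' direction---is the standard one, and is essentially how the cited source handles the Petersen case of its more general theorem on covers of Kneser graphs. All the key steps check out: richness gives $|\sigma(u)\cap\sigma(v)|=1$, hence $\sigma(u)\cap\sigma(v)=\{\sigma(uv)\}$; the three edges at $v$ and the three edges at $f(v)$ are both indexed by the palette $\sigma(v)$, giving the local bijection; and surjectivity follows from connectedness of $P_{10}$ since the image of $V(G)$ is closed under adjacency. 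The only point worth making fully explicit is that the graph on $3$-subsets of $[5]$ with adjacency ``intersect in exactly one element'' is indeed the Petersen graph, which follows by complementation $X\mapsto[5]\setminus X$ onto the Kneser graph $K(5,2)$; with that remark added, the argument is complete.
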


\paragraph{Poor edges.}

The above theorem implies that any graph $G$ which is not a covering graph of
the Petersen graph satisfies $\mathrm{{poor}}(G)>0.$ As for a graph $G$ which
is a covering graph of the Petersen graph it might hold $\mathrm{{poor}%
}(G)=0\mathrm{,}$ then every normal $5$-coloring of $G$ is strong. Note that
$G$ might have a normal coloring with at least one poor edge in which case
$\mathrm{{poor}}(G)>0.$

When considering $\mathrm{{poor}}(G)\mathrm{,}$ it is useful to first consider
$3$-cycles and $4$-cycles of a snark $G,$ if $G$ contains such cycles. For
$3$-cycles the following observation is rather well known and obvious.

\begin{remark}
Let $G$ be a bridgeless cubic graph and $\sigma$ a normal $5$-coloring of $G.$
If $G$ contains a $3$-cycle $C$, then every edge of $C$ is poor in $\sigma$.
\end{remark}

A similar observation can be made for $4$-cycles, perhaps it is a folk one too.

\begin{remark}
Let $G$ be a bridgeless cubic graph and $\sigma$ a normal $5$-coloring of $G.$
If $G$ contains a $4$-cycle $C$, then either $2$ or $4$ edges of $C$ are poor
in $\sigma$.
\end{remark}

\begin{proof}
Let $C=u_{0}u_{1}u_{2}u_{3}u_{0}$ be a $4$-cycle in $G,$ assume that the
neighbor of $u_{i}\in V(C)$ which does not belong to $C$ is denoted by
$v_{i}.$ Assume further that edges of $C$ are denoted by $e_{i}=u_{i}u_{i+1}$
and edges incident to $C$ by $f_{i}=u_{i}v_{i}$ for $i=0,\ldots,3.$ The proof
is by contradiction.

Assume first that precisely one edge of $C$ is rich, say $e_{0}$. We may
assume that $\sigma(e_{0})=1,$ and since $\sigma$ is proper also $\sigma
(e_{1})=2$ and $\sigma(f_{1})=3.$ Since $e_{1}$ and $e_{2}$ are poor, it
follows that $\sigma(u_{2})=\sigma(u_{3})=\{1,2,3\},$ which further implies
$\sigma(e_{3})\in\{1,2,3\}$ and that contradicts $e_{0}$ being rich.

Assume now that at least $3$ edges of $C$ are rich, say edges $e_{i}$ for
$i=1,2,3,$ and that $\sigma(e_{i})=i.$ Denote $f_{i}=u_{i}v_{i}$ for
$i=0,\ldots,3.$ Since $e_{2}$ is rich, it follows $\sigma(f_{2})\in\{4,5\},$
we may assume $\sigma(f_{2})=4.$ Then $\sigma(f_{3})=5.$ Since $e_{3}$ is
rich, for $e_{0}$ we have $\sigma(e_{0})\in\{1,2\},$ which contradicts $e_{1}$
being rich.
\end{proof}

We can now conclude that any graph which has a normal $5$-coloring without
poor edges must have girth $\geq5$. Let $P_{10}^{\Delta}$ denote a graph
obtained from $P_{10}$ by truncating one vertex, thus a normal $5$-coloring of
$P_{10}^{\Delta}$ has at least $3$ poor edges and it is easily verified that
it has precisely $3$ poor edges. Our investigation of normal colorings of
small snarks combined with Theorem \ref{Tm_main}$\mathrm{,}$ makes us believe
that the following may hold.

\begin{conjecture}
Let $G$ be a bridgeless cubic graph. If $G\not =P_{10}$, then $\mathrm{{poor}%
}(G)>0$. Moreover, if $G\not =P_{10},P_{10}^{\Delta}$, then $\mathrm{{poor}%
}(G)\geq6.$
\end{conjecture}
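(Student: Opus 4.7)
The plan is to split the argument according to the edge-chromatic index and the girth of $G$, leveraging the two observations above about short cycles and the theorem of Lu\v{z}ar et al.\ characterizing strong chromatic index $5$. If $\chi'(G) = 3$, then every proper $3$-edge-coloring is a normal $5$-coloring with every edge poor, and since $|E(G)| = 3|V(G)|/2 \geq 6$ for any bridgeless cubic graph, the bound $\mathrm{poor}(G) \geq 6$ follows immediately. We may thus assume $G$ is a snark. If $G$ contains a triangle, the first observation forces $3$ poor edges; to push this to $6$ one would either locate a second triangle (which seems to be the case in all snarks other than the exceptional $P_{10}^{\Delta}$) or propagate additional poor edges into the neighborhood of the triangle via Kempe-chain analysis. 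If $G$ is triangle-free but contains a $4$-cycle, the second observation gives $2$ or $4$ poor edges on that cycle; combining several $4$-cycles, or arguing that a snark with a single $4$-cycle has additional constrained poor edges nearby, should then yield the bound.

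For snarks of girth at least $5$, split further using Lu\v{z}ar et al. If $G$ is not a covering graph of $P_{10}$, then $\chi_s'(G) > 5$, so every normal $5$-coloring has at least one poor edge. From a single poor edge $uv$ one would propagate further poor edges via Kempe chains: the fact that $u$ and $v$ share a missing color constrains the two-colored components through their neighborhoods, and one would need a structural lemma showing that these constraints propagate to at least five more poor edges. If instead $G$ is a covering of $P_{10}$ with $G \neq P_{10}$, the pullback of the unique normal $5$-coloring of $P_{10}$ is a strong coloring of $G$, so poor edges do not arise trivially; here one should exploit the nontriviality of the covering to identify a short even-length cycle in the base $P_{10}$ along which one may perform Kempe-swap modifications in the spirit of Theorem \ref{Tm_main}, producing at least $6$ poor edges in the lifted coloring without destroying normality.

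The main obstacle is expected to be the covering case. Unlike the superposition snarks of Theorem \ref{Tm_main}, a general nontrivial Petersen covering has no distinguished cycle on which a local modification is automatically safe, and the rigidity of $P_{10}$'s unique normal $5$-coloring propagates to very tight constraints on liftings. A complete proof likely needs either a structural classification of Petersen coverings distinct from $P_{10}$ itself or a more intrinsic Kempe-swap argument showing that any normal $5$-coloring of a cubic graph outside $\{P_{10},P_{10}^{\Delta}\}$ can be perturbed to contain at least six poor edges. The fact that the extremal example $P_{10}^{\Delta}$ attains precisely $3$ poor edges, all on its unique triangle, suggests that the correct structural dividing line is the presence of an ``isolated'' triangle in an otherwise Petersen-like environment, which points toward a case analysis anchored on girth and short-cycle structure.
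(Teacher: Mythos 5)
The statement you are trying to prove is presented in the paper as a \emph{conjecture}: the authors offer no proof, only supporting evidence (the two observations on $3$- and $4$-cycles, the theorem of Lu\v{z}ar et al., Theorem \ref{Tm_main}, and an unreported computer search over small snarks). So there is no argument in the paper to compare yours against, and your proposal, by your own admission, is a research plan rather than a proof. The only branch you actually close is $\chi'(G)=3$, where every edge is poor and $|E(G)|\geq 6$; that part is correct. Every other branch contains a genuine gap. Most fundamentally, for a snark $G$ the very existence of a normal $5$-coloring --- i.e.\ $\mathrm{NC}(G)\neq\emptyset$, without which $\mathrm{poor}(G)$ is undefined --- is exactly the Petersen Coloring Conjecture, so your case analysis silently assumes the hardest open problem in the area before it even begins to count poor edges.

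Beyond that, the quantitative steps do not reach the stated bounds. A triangle forces only $3$ poor edges, and your fallback that ``all snarks other than $P_{10}^{\Delta}$ with a triangle have a second triangle'' is false: truncating one vertex of any girth-$5$ snark other than $P_{10}$ produces a snark with exactly one triangle, and for such graphs the conjecture still demands $6$ poor edges, which your argument does not supply. The $4$-cycle observation yields $2$ or $4$ poor edges, not $6$. In the girth-$\geq 5$, non-covering case, the Lu\v{z}ar et al.\ theorem gives one poor edge per normal $5$-coloring, and the ``structural lemma'' propagating one poor edge to six via Kempe chains is precisely the missing content --- note that Observation \ref{Obs_Kempe} only lets you swap colors along a chain of edges that are \emph{already} poor; it does not create new poor edges. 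Finally, in the covering case your pullback observation works against you: it exhibits a normal $5$-coloring with \emph{zero} poor edges, and since $\mathrm{poor}(G)$ is a maximum you would need to construct a different normal $5$-coloring with many poor edges, for which no mechanism is given. You have correctly identified where the difficulties lie, but each of these is an open problem, not a step that can be filled in routinely.
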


\noindent Let us stress that having poor edges in abundance may enable one to
use Kempe chains, which could be the key to solving the Petersen Coloring Conjecture.

\bigskip

\bigskip\noindent\textbf{Acknowledgments.}~~Both authors acknowledge partial
support of the Slovenian research agency ARRS program\ P1-0383 and ARRS
project J1-3002. The first author also the support of Project
KK.01.1.1.02.0027, a project co-financed by the Croatian Government and the
European Union through the European Regional Development Fund - the
Competitiveness and Cohesion Operational Programme.

\end{document}